\newcommand{\tfa}{time-frequency analysis}
\newcommand{\ft}{Fourier transform}
\newcommand{\stft}{short-time Fourier transform}
\newcommand{\tf}{time-frequency}
\newtheorem{theorem}{Theorem}[section]
\newtheorem{corollary}[theorem]{Corollary}
\newtheorem{proposition}[theorem]{Proposition}
\newtheorem{definition}[theorem]{Definition}
\newtheorem{remark}[theorem]{Remark}
\theoremstyle{definition}
\newcommand{\beqa}{\begin{eqnarray*}}
\newcommand{\eeqa}{\end{eqnarray*}}
\DeclareMathOperator*{\essupp}{ess\,sup\,}
\newcommand{\field}[1]{\mathbb{#1}}
\newcommand{\bR}{\field{R}}        %  real numbers
\newcommand{\bN}{\field{N}}        %  natural numbers
\newcommand{\bZ}{\field{Z}}        %  whole numbers
\newcommand{\bC}{\field{C}}        %  complex numbers
\def\la{\lambda}
\def\th{\theta}
\def\cF{\mathcal{F}}              % Calligraphic Letters
\def\cS{\mathcal{S}}
\def\a{\alpha}
\def\rd{\bR^d}
\def\rdd{{\bR^{2d}}}
\def\zdd{{\bZ^{2d}}}
\def\lrd{L^2(\rd)}
\def\lrdd{L^2(\rdd)}
\def\intrd{\int_{\rd}}
\def\intrdd{\int_{\rdd}}
\def\<{\left<}
\def\>{\Big)>}
\def\mv1{M_v^1}
\def\phas{(x,\xi)}
\def\mn{(m,n)}
\def\mn'{(m',n')}
\def\o{\xi}
\def\a{\alpha}
\def\Ren{\mathbb{R}^d}
\def\sch{\mathcal{S}}
\def\Tau{\mathcal{T}}
\def\tauhz0{\widehat{\mathcal{T}}^\hbar(z_0)}
\def\tauhz{\widehat{\mathcal{T}}^\hbar(z)}
\def\Sn2{S_{2}(L^{2}(\Ren))}
\def\S1{S_{1}(L^{2}(\Ren))}
\def\sig00{\sigma_{0,0}}
\def\th{\widehat{\mathcal{T}}^\hbar}
\def\la{\langle}
\def\ra{\rangle}
\begin{document}
%[Semi-classical Time-frequency Analysis]	
\title {Kernel Theorems for modulation spaces}
\author{Elena Cordero}
\address{Universit\`a di Torino, Dipartimento di
Matematica, via Carlo Alberto 10, 10123 Torino, Italy}
%\curraddr{\"{y} }
\email{elena.cordero@unito.it}
%\thanks{}
%\author{Maurice de Gosson}
%\address{University of Vienna, Faculty of Mathematics,
%Oskar-Morgenstern-Platz 1 A-1090 Wien, Austria}
%%\curraddr{\"{y} }
%\email{maurice.de.gosson@univie.ac.at}
%\thanks{}
\author{Fabio Nicola}
\address{Dipartimento di Scienze Matematiche, Politecnico di Torino, corso
Duca degli Abruzzi 24, 10129 Torino, Italy}
%\curraddr{\"{y} }
\email{fabio.nicola@polito.it}
%\thanks{}  
\subjclass[2010]{42B35, 42C15, 47G30, 81Q20}
\keywords{Time-frequency analysis, Gabor frames, modulation spaces, }
\date{}

\begin{abstract}
We deal with kernel theorems for modulation spaces. We completely characterize the continuity of a linear operator on the modulation spaces $M^p$ for every $1\leq p\leq\infty$, by the membership of its kernel to (mixed) modulation spaces. Whereas Feichtinger's kernel theorem (which we recapture as a special case) is the modulation space counterpart of  Schwartz' kernel theorem for temperate distributions, our results do not have a couterpart in distribution theory. This reveals the superiority, in some respects, of the modulation space formalism upon distribution theory, as already emphasized in Feichtinger's manifesto for a post-modern harmonic analysis, tailored to the needs of mathematical signal processing. The proof uses in an essential way a discretization of the problem by means of Gabor frames. We also show the equivalence of the operator norm and the modulation space norm of the corresponding kernel. For operators acting on $M^{p,q}$ a similar characterization is not expected, but sufficient conditions for boundedness can be sated in the same spirit.
\end{abstract}

\maketitle

\section{Introduction}
Schwartz' kernel theorem certainly represents one of the most important results in modern Functional Analysis. It states, in the framework of temperate distributions, that every linear continuous operator $A:\cS(\rd)\to\cS'(\rd)$ can be regarded as an integral operator in a generalized sense, namely 
\[
\langle Af,g\rangle=\langle K,g\otimes \overline{f}\rangle\qquad f,g\in\cS(\rd),
\]
in the sense of distributions, for some kernel $K\in\cS'(\rdd)$, and viceversa \cite{hor}. We write, formally,
\begin{equation}\label{IOP}
A f(x)=\intrd K(x,y)\,f(y)dy\quad f\in\cS(\rd).
\end{equation}
This result provides a general big framework including most operators of interest in Harmonic Analysis. However, for most applications in Time-frequency Analysis and Mathematical Signal Processing the spaces $\cS(\rd)$ and $\cS'(\rd)$ can be fruitfully replaced by the modulation spaces $M^1(\rd)$ and $M^\infty(\rd)$, respectively, introduced by H. Feichtinger in \cite{F1} and nowadays widely used as a fundamental tool in Time-frequency Analysis (see \cite{book} and also Feichtinger's manifesto for a postmodern Harmonic Analysis \cite{fei}). One of the obvious advantages is that $M^1(\rd)$ and $M^\infty(\rd)$ are Banach spaces; secondly  their norm gives a direct and transparent information on the time-frequency concentration of a function or temperate distribution. The same holds, more generally, for a whole scale of intermediate spaces $M^p(\rd)$, $1\leq\ p\leq\infty$, as well as the more general mixed norm spaces $M^{p,q}(\rd)$, $1\leq p,q\leq\infty$. \par
In short, for $(x,\xi)\in\rd\times\rd$, define the time-frequency shifts $\pi(x,\xi)$ by
\[
\pi(x,\xi)f(t)=e^{2\pi i t \xi} f(t-x), \quad f\in\cS'(\rd).
\]
Then $M^{p,q}(\rd)$ is the space of temperate distributions $f$ in $\rd$ such 
that the function 
\[
(x,\xi)\mapsto\langle f,\pi(x,\xi)g\rangle,
\]
for some (and therefore any) window $g\in\cS(\rd)\setminus\{0\}$, is in $L^q(\rd_\xi;L^p(\rd_x))$, endowed with the obvious norm (see Section 2 below for details). We also set $M^p(\rd)=M^{p,p}(\rd)$.
Weighted versions are also used in the literature but here we limit ourselves, for simplicity, to the unweighted case. \par
Now, a kernel theorem in the framework of modulation spaces was  announced in \cite{fei0} and proved in \cite{gei1}; see also \cite[Theorem 14.4.1]{book}. It states that linear continuous operators $A:M^1(\rd)\to M^\infty(\rd)$ are characterized by the memenbership of thier distribution kernel $K$ to $M^\infty(\rdd)$.\par
In this note we provide a similar characterization of linear continuous operators acting on the following spaces:\par\smallskip
\begin{itemize}
\item $M^1(\rd)\to M^p(\rd)$, for a fixed $1\leq p\leq\infty$
\item $M^p(\rd)\to M^\infty(\rd)$, for a fixed $1\leq p\leq\infty$
\item $M^p(\rd)\to M^p(\rd)$, for every $1\leq p\leq\infty$.
\end{itemize}\par\smallskip
In all cases the characterization is given in terms of the membership of their distribution kernels to certain {\it mixed modulation spaces}, first introduced and studied in \cite{Bi2010}. The proof uses in an essential way a discretization of the  problem by means of Gabor frames. We also show the equivalence of the operator norm and the modulation space norm of the corresponding kernel. \par
These results seem remarkable, because no such a characterization is known, for example, for linear continuous operators from $\cS(\rd)$ into itself and similarly from $\cS'(\rd)$ into itself. This again shows the superiority, in some respects, of the modulation space framework upon distribution theory.\par
For operators acting on mixed-norm modulation spaces $M^{p,q}(\rd)\to M^{p,q}(\rd)$ such a characterization is not expected, as we will see in Section 3 below. We have however sufficient conditions for boundedness in the same spirit.\par
Let us also observe that in the literature there are plenty of {\it sufficient conditions} for operators in certain classes (Fourier multipliers, localization operators, pseudodifferential operators, Fourier integral operators) to be bounded $M^p(\rd)\to M^p(\rd)$ for every $1\leq p\leq\infty$, or $M^{p,q}(\rd)\to M^{p,q}(\rd)$ for every $1\leq p,q\leq\infty$. For example it is known that pseudodifferential operators with Weyl symbol in $M^{\infty,1}$ are in fact bounded $M^{p,q}(\rd)\to M^{p,q}(\rd)$ for every $1\leq p,q\leq\infty$ \cite{co1,book,gro1,toft1,Toftweight} (see also \cite{Sjostrand1}). On the other hand the Fourier multipier with symbol $e^{i|\xi|^2}$ is still bounded on these spaces \cite{benyi}, although that symbol does not belong to $M^{\infty,1}(\rdd)$. Similary, it is easy to see that any linear continuous operator $A:\cS(\rd)\to\cS'(\rd)$ whose Gabor matrix enjoys decay estimates such as
\[
|\langle A\pi(z)g,\pi(w)g\rangle|\leq C(1+|w-\chi(z)|)^{-s}\qquad z,w\in\rdd
\]
for some $s>2d$, $g\in\cS(\rd)\setminus\{0\}$, and bi-Lipschtiz mapping $\chi:\rdd\to\rdd$, is bounded $M^p(\rd)\to M^p(\rd)$ for every $1\leq p\leq\infty$ \cite{fio3} (see also \cite{fio3,fio1,fio2}). But once again this condition is not necessary at all. \par
The very simple characterization provided in the present paper when $p=q$ therefore includes, at least implicitly, all these continuity results on $M^p$.

\section{Backgrounds on \tfa} 
\subsection{Mixed norm spaces} We summarize the main properties of mixed norm spaces, we refer to \cite{BP61} for a full treatment of this subject. \par

Consider measure spaces $(X_i,\mu_i)$, indices $p_i\in [1,\infty]$, with $i=1,\dots,d$.  Then the mixed norm space 
$L^{p_1,p_2,\dots, p_d}(X_1\times X_2\times\dots\times X_d;\mu_1\times\mu_2\times\dots\times\mu_d)$ consists of the measurable functions $F:X_1\times X_2\times\dots\times X_d\to \bC$  such that the following norm is finite:
\begin{equation}\label{MNS}
\|F\|_{L^{p_1,p_2,\dots, p_d}}=\left(\int_{X_d}\cdots\left(\int_{X_1}|F(x_1,\dots,x_d)|^{p_1 }\,d\mu_{1}(x_1)\right)^{\frac{p_2}{p_1}}\cdots \,d\mu_{d}(x_d)\right)^{\frac{1}{p_d}}
\end{equation}
with standard modification when $p_i=\infty$ for some $i$.\par
If $X_i=\bR$ and $\mu_i$ is the Lebesgue measure on $\bR$ for every $i=1,\dots,d$, we simply write $L^{p_1,\dots, p_d}$.
If each $X_i$ is a countable set and the corresponding measure $\mu_i$ is the counting measure, we use the notation
$\ell^{p_1,\dots, p_d}(X_1\times X_2\times\dots\times X_d)$.

The mixed norm spaces $L^{p_1,p_2,\dots, p_d}(X_1\times X_2\times\dots\times X_d;\mu_1\times\mu_2\times\dots\times\mu_d)$ are Banach spaces, generalizations of the classical $L^p$ and $\ell^p$ spaces, cf. \cite{BP61}. 
   \subsection{Modulation spaces}
 For $x,\xi\in \rd$, define the translation operator $T_x$ and modulation operator $M_\xi$ by
 $$T_x f(t)=f(t-x)\quad M_\xi f(t)= e^{2\pi i tx} f(t).
 $$
For $z=\phas$, the composition operator $\pi(z)=M_\xi T_x$ is called a \tf \, shift of the phase space $\rdd$. 

 For a fixed non-zero $g \in \cS (\rd )$, the \stft\ (STFT) of $f \in
 \cS ' (\rd ) $ with respect to the window $g$ is given by
 \begin{equation}
 \label{eqi2}
 V_gf(x,\o)=\int_{\rd}
 f(t)\, {\overline {g(t-x)}} \, e^{-2\pi i\o t}\,dt\, =\la f, \pi(z)g\ra\quad z=\phas,
 \end{equation}
where the integral is intended as the (anti-)duality between $\cS'$ and $\cS$.  The \stft\, gives information about the \tf\, content of the signal $f$: indeed, roughly speaking, it can be seen as a localized \ft\, of $f$  in a neighbourhood of $x$.

The STFT is the basic tool in the definition of modulation spaces. Indeed,  modulation  space norms are a  measure
of
the joint time-frequency distribution of $f\in \sch '$. For their
basic properties we refer  \cite{feichtinger80,feichtinger83,feichtinger90} (see also \cite{Bi2010, MOP2016, MP2013, Pfander2013}) and the textbook \cite{book}.
\begin{definition}\label{2.1}
Fix $g\in \cS(\rd)$, $p=(p_1,\dots,p_d)$, $q=(q_1,\dots,q_d)$, $p_i,q_i\in [1,\infty]$, $i=1,\dots,d$. Then 
$$M^{p,q}(\rd)=\{f\in\cS'(\rd):\, \|f\|_{M^{p,q}(\rd)}<\infty\},
$$
where 
$$\|f\|_{M^{p,q}(\rd)}=\|V_g f\|_{L^{p_1,\dots,p_d,q_1,\dots,q_d}}.
$$
\end{definition}
If 
$p=p_1,\dots,p_d,\quad q=q_1,\dots,q_d$, we come back to the classical modulation spaces of distributions $f\in \cS'(\rd)$ such that 
$$\|f\|_{M^{p,q}(\rd)}=\left(\intrd \left( \intrd |V_g f \phas|^p\,dx\right)^{\frac q p} d\xi \right)^\frac 1 q <\infty$$
(cf. \cite{F1}). The extension of the original modulation spaces in Definition \ref{2.1} was widely studied and applied to the investigation of boundedness and sampling properties for  pseudodifferential operators in \cite{MOP2016, MP2013, Pfander2013}. \par 
To state our kernel theorems we need a further extension of Definition \ref{2.1}, that was  introduced by S. Bishop in \cite{Bi2010}, in her study of Schatten $p$-class properties for integral operators.  This is a natural generalization of the classical modulation spaces, as will be clear in the sequel.

We shall mainly use the notation in \cite{Bi2010}. From now on we assume that $c$ is a permutation of the set $\{1,\dots,2d\}$.  We identify $c$ with the linear bijection  
$$\tilde{c}:\rdd \to\rdd,\quad \tilde{c}(x_1,\dots,x_{2d})=(x_{c(1)},\dots, x_{c(2d)}).
$$

\begin{definition}\label{2.2}
	Consider $g\in\cS(\rd)$ and let $c$ be a permutation corresponding to the map $\tilde{c}$ as above. Then  $M(c)^{p_1,p_2,\dots,p_{2d}}$ is the \emph{\bf mixed modulation space} of tempered distributions $f\in\cS'(\rd)$ for which
	\begin{equation}\label{norm-mix}
\|f\|_{M(c)^{p_1,p_2,\dots,p_{2d}}} =\|V_g f\circ \tilde{c}\|_{L^{p_1,p_2,\dots,p_{2d}}} <\infty.
	\end{equation}
\end{definition}
If $p=p_1=p_2=\dots=p_d,\quad q=p_{d+1}=p_{d+2}=\dots=p_{2d}
$ we simply write $M(c)^{p,q}$; if $p=p_1=p_2=\dots=p_d=p_{d+1}=p_{d+2}=\dots=p_{2d}$, we shorten to $M(c)^{p}$.
\begin{remark}\ \par
	\begin{itemize}
		\item [(i)] If $c$ is the identity permutation and the indices satisfy
		$$p=p_1=p_2=\dots=p_d,\quad q=p_{d+1}=p_{d+2}=\dots=p_{2d}
		$$
		then $M(c)^{p_1,p_2,\dots,p_{2d}}=M^{p,q}(\rd)$ is the original modulation space introduced by Feichtinger in \cite{F1}.
			\item [(ii)] If $c$ is the identity permutation and $p=(p_1,\dots,p_d),\, q=(q_1,\dots,q_d)$, we come back to the modulation spaces in Definition \ref{2.1}.
			\item [(iii)] If the indices satisfy
			$$p=p_1=p_2=\dots=p_d=p_{d+1}=p_{d+2}=\dots=p_{2d},
			$$
			given any permutation $c$, a simple change of variables yields
			$$ M(c)^{p}=M^p(\rd).
			$$
		\item[(iv)] In general  the  mixed modulation spaces  differ from the classical ones of Definition \ref{2.1}. 
A simple example is provided by the permutation 
\begin{equation}\label{perminv}
c_0(1,2,\dots,d,d+1,\dots,2d)=(d+1,d+2,\dots,2d,1,2,\dots, d).
\end{equation}
If we choose
$$p=p_1=p_2=\dots=p_d,\quad q=p_{d+1}=p_{d+2}=\dots=p_{2d}
$$
then it is easy to see that
$$ M^{p,q}(c_0)=W(\cF L^p,L^q)(\rd),
$$
that is the space of distributions $f\in\cS'(\rd)$ such that 
$$\|f\|_{W(\cF L^p,L^q)(\rd)}=\left(\intrd \left( \intrd |V_g f \phas|^p\,d\xi\right)^{\frac q p} dx \right)^\frac 1 q <\infty$$
for some window function $g\in\cS(\rd)\setminus\{0\}.$

	\end{itemize}
\end{remark}

It can be shown (cf.\ \cite{Bi2010}) that the definition of mixed modulation spaces is independent of the choices of the window $g$ in $\cS(\rd)$, with different windows giving equivalent norms. Furthermore, this fact also holds for $g$ in the larger space $M^1(\rd)$, as follows by the subsequent Theorem \ref{Invform}, which gives the inversion formula on $M(c)^{p_1,\dots, p_{2d}}$.
\begin{theorem}\label{Invform}
Suppose $c$ is a permutation of $\{1,\dots,2d\}$, and for a measurable function $\psi:\rd \to \bC$ define the operator $\Upsilon_\psi$ by
\begin{equation}
\Upsilon_\psi F(t)=\intrdd F(x) \pi(\tilde{c}(x))\psi(t)\, dx. 
\end{equation}
(i) Given $\psi, g\in M^1(\rd)$, $p_1,\dots, p_{2d}\in [1,\infty]$, and $f\in  M(c)^{p_1,\dots, p_{2d}}$, we have 
$$\Upsilon_\psi (V_g f\circ \tilde{c})=\la\psi, g\ra f.
$$
(ii) Different windows in $M^1(\rd)$ define equivalent norms on  
	$ M(c)^{p_1,\dots, p_{2d}}$.
\end{theorem}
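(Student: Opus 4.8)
The plan is to reduce both parts to the ordinary short-time Fourier transform inversion and orthogonality relations, exploiting that $\tilde{c}$ is a measure-preserving \emph{linear} bijection of $\rdd$ (a coordinate permutation has Jacobian $\pm1$). For part (i), I would read the integral defining $\Upsilon_\psi$ weakly and test against an arbitrary $h\in\cS(\rd)$, so that
\[
\la \Upsilon_\psi(V_g f\circ\tilde{c}),h\ra=\intrdd (V_g f)(\tilde{c}(x))\,\la\pi(\tilde{c}(x))\psi,h\ra\,dx .
\]
The substitution $z=\tilde{c}(x)$ preserves Lebesgue measure and, since $\la\pi(z)\psi,h\ra=\overline{V_\psi h(z)}$, turns the right-hand side into $\intrdd V_g f(z)\,\overline{V_\psi h(z)}\,dz$. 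Absolute convergence is guaranteed by the mixed-norm H\"older inequality: $V_g f\circ\tilde{c}\in L^{p_1,\dots,p_{2d}}$ by hypothesis, while $\psi\in M^1(\rd)$ and $h\in\cS(\rd)$ make $V_\psi h$ rapidly decaying, whence $V_\psi h\circ\tilde{c}$ lies in the dual space $L^{p_1',\dots,p_{2d}'}$. The orthogonality (Moyal) relation $\la V_g f,V_\psi h\ra=\la f,h\ra\,\overline{\la g,\psi\ra}=\la\psi,g\ra\,\la f,h\ra$, valid for $f\in\cS'(\rd)$, $h\in\cS(\rd)$ and windows in $M^1(\rd)$, then identifies the distribution as $\la\psi,g\ra f$.

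For part (ii), let $g_1,g_2\in M^1(\rd)$ be two windows. Applying (i) with $\psi=g_1$ (so that $\la g_1,g_1\ra=\norm{g_1}_2^2\neq0$) represents $f=\norm{g_1}_2^{-2}\,\Upsilon_{g_1}(V_{g_1}f\circ\tilde{c})$; computing $V_{g_2}f$ and undoing the permutation by the same change of variables yields the pointwise bound
\[
|V_{g_2}f(w)|\le \norm{g_1}_2^{-2}\,\big(|V_{g_1}f|\ast|V_{g_2}g_1|\big)(w),\qquad w\in\rdd,
\]
where I have used $|\la\pi(z)g_1,\pi(w)g_2\ra|=|V_{g_2}g_1(w-z)|$. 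The crucial observation is that, $\tilde{c}$ being linear, convolution commutes with composition, i.e. $(A\ast B)\circ\tilde{c}=(A\circ\tilde{c})\ast(B\circ\tilde{c})$. Composing the last inequality with $\tilde{c}$ and invoking the mixed-norm Young inequality $\norm{A\ast B}_{L^{p_1,\dots,p_{2d}}}\le\norm{A}_{L^{p_1,\dots,p_{2d}}}\norm{B}_{L^1}$ gives
\[
\norm{V_{g_2}f\circ\tilde{c}}_{L^{p_1,\dots,p_{2d}}}\le \norm{g_1}_2^{-2}\,\norm{V_{g_1}f\circ\tilde{c}}_{L^{p_1,\dots,p_{2d}}}\,\norm{V_{g_2}g_1}_{L^1},
\]
because $\tilde{c}$ preserves measure and $V_{g_2}g_1\in L^1(\rdd)$ whenever $g_1,g_2\in M^1(\rd)$. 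Exchanging the roles of $g_1$ and $g_2$ yields the reverse estimate, and hence the claimed norm equivalence.

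I expect the main obstacle to be precisely the interaction between the permutation $\tilde{c}$ and the convolution structure underlying the window-change in (ii): the whole argument survives only because $\tilde{c}$ is linear, so that composition with $\tilde{c}$ intertwines the convolution and leaves the $L^1$ norm of the cross-STFT $V_{g_2}g_1$ unchanged. The accompanying technical input---that $V_{g_2}g_1\in L^1(\rdd)$ as soon as both windows lie in $M^1(\rd)$, which is what permits windows beyond $\cS(\rd)$---is the feature that makes the stated extension to $M^1(\rd)$ windows possible.
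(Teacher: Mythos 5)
Your argument is essentially the standard one, and it is the route the paper itself implicitly relies on: the paper gives no proof of this theorem but defers to Bishop \cite{Bi2010}, where the result is obtained exactly by reducing to the classical STFT inversion/orthogonality relations via the observations that $\tilde{c}$ preserves Lebesgue measure and, being linear, intertwines convolution with composition. Both parts of your proof are structurally sound.

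One justification in part (i) is inaccurate, though the conclusion survives. For $\psi\in M^1(\rd)\setminus\cS(\rd)$ and $h\in\cS(\rd)$, the cross-STFT $V_\psi h$ is \emph{not} rapidly decaying (it decays no better than $V_h\psi$ does), so you cannot get $V_\psi h\circ\tilde{c}\in L^{p_1',\dots,p_{2d}'}$ from rapid decay. Nor does the weaker true statement $V_\psi h\in L^1\cap L^\infty$ suffice: for genuinely mixed exponents, $L^1\cap L^\infty$ does not embed into every $L^{q_1,\dots,q_{2d}}$ (a family of thin spikes wandering in the first variable lies in $L^1\cap L^\infty(\bR^2)$ but not in $L^{\infty,1}$). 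What you actually need, and what is true, is the Wiener amalgam bound $V_\psi h\in W(L^\infty,L^1)(\rdd)$, valid whenever $\psi,h\in M^1(\rd)$; the amalgam space $W(L^\infty,L^1)$ does embed into $L^{q_1,\dots,q_{2d}}$ for every choice of exponents (locally one takes a sup, globally an $\ell^1$ sum, and $\ell^{1,\dots,1}\subset\ell^{q_1,\dots,q_{2d}}$), and composition with the measure-preserving permutation $\tilde{c}$ is harmless. With that substitution your H\"older step, and hence the whole proof, goes through.
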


\begin{corollary}
	For any $p_1,\dots, p_{2d}\in [1,\infty]$, $M(c)^{p_1,\dots, p_{2d}}$ is a Banach space.
\end{corollary}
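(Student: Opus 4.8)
The plan is to present $M(c)^{p_1,\dots,p_{2d}}$ as isometric to a subspace of the mixed-norm Banach space $L^{p_1,\dots,p_{2d}}(\rdd)$ of Section 2.1, and then to show that this subspace is closed. Indeed, the map $\iota\colon f\mapsto V_g f\circ\tilde c$ is linear and, by the very definition \eqref{norm-mix}, satisfies $\|f\|_{M(c)^{p_1,\dots,p_{2d}}}=\|\iota f\|_{L^{p_1,\dots,p_{2d}}}$, so it is an isometry onto its range. First I would check that $\|\cdot\|_{M(c)^{p_1,\dots,p_{2d}}}$ is a genuine norm: positive homogeneity and the triangle inequality are inherited from $L^{p_1,\dots,p_{2d}}$, while definiteness follows from the injectivity of the STFT, which is itself a consequence of the inversion formula in Theorem \ref{Invform}(i) (if $V_g f=0$ a.e.\ then $f=\Upsilon_\psi(V_g f\circ\tilde c)=0$ for any $\psi$ with $\langle\psi,g\rangle=1$). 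It then remains to prove completeness, i.e.\ that the range of $\iota$ is closed in $L^{p_1,\dots,p_{2d}}$.

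Next, fix a non-zero window $g\in\cS(\rd)$ and set $\psi=g/\|g\|_{L^2}^2$, so that $\psi\in\cS(\rd)\subset M^1(\rd)$ and $\langle\psi,g\rangle=1$; Theorem \ref{Invform}(i) then reads $\Upsilon_\psi(V_g f\circ\tilde c)=f$ for all $f\in M(c)^{p_1,\dots,p_{2d}}$. Let $(f_n)$ be a Cauchy sequence in $M(c)^{p_1,\dots,p_{2d}}$. Then $F_n:=V_g f_n\circ\tilde c$ is Cauchy in $L^{p_1,\dots,p_{2d}}(\rdd)$, which is complete, hence $F_n\to F$ there for some $F\in L^{p_1,\dots,p_{2d}}(\rdd)$. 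Since $f_n=\Upsilon_\psi F_n$, the natural candidate for the limit is $f:=\Upsilon_\psi F$.

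The crux is to show that $F$ again lies in the range of $\iota$, equivalently that $f\in M(c)^{p_1,\dots,p_{2d}}$ with $\iota f=F$. The key ingredient is the continuity of the synthesis operator $\Upsilon_\psi\colon L^{p_1,\dots,p_{2d}}(\rdd)\to\cS'(\rd)$. Pairing $\Upsilon_\psi F$ (understood weakly) with a test function $h\in\cS(\rd)$ yields $\langle\Upsilon_\psi F,h\rangle=\int_{\rdd}F(x)\,\overline{V_\psi h(\tilde c(x))}\,dx$; since $V_\psi h\in\cS(\rdd)$, the function $V_\psi h\circ\tilde c$ belongs to the conjugate mixed-norm space $L^{p_1',\dots,p_{2d}'}(\rdd)$, with $p_i'$ the Hölder conjugate of $p_i$, with norm dominated by a Schwartz seminorm of $h$, so Hölder's inequality for mixed-norm spaces (cf.\ \cite{BP61}) bounds $|\langle\Upsilon_\psi F,h\rangle|$ by $\|F\|_{L^{p_1,\dots,p_{2d}}}$ times that seminorm. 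This shows both $\Upsilon_\psi F\in\cS'(\rd)$ and the continuity of $\Upsilon_\psi$ into $\cS'(\rd)$. Therefore $f_n=\Upsilon_\psi F_n\to\Upsilon_\psi F=f$ in $\cS'(\rd)$, which gives the pointwise convergence $V_g f_n(z)=\langle f_n,\pi(z)g\rangle\to\langle f,\pi(z)g\rangle=V_g f(z)$ for every $z\in\rdd$. Comparing this with a subsequence of $(F_n)$ converging to $F$ almost everywhere yields $F=V_g f\circ\tilde c$ a.e., whence $\|f\|_{M(c)^{p_1,\dots,p_{2d}}}=\|F\|_{L^{p_1,\dots,p_{2d}}}<\infty$, so $f\in M(c)^{p_1,\dots,p_{2d}}$, and $\|f_n-f\|_{M(c)^{p_1,\dots,p_{2d}}}=\|F_n-F\|_{L^{p_1,\dots,p_{2d}}}\to0$.

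I expect the main obstacle to be precisely this last identification: reconciling the two modes of convergence, weak-$\ast$ in $\cS'(\rd)$ against norm (and hence a.e.) convergence in $L^{p_1,\dots,p_{2d}}$, so as to conclude that the limit $F$ is genuinely the STFT of a tempered distribution composed with $\tilde c$, i.e.\ that $\iota$ has closed range. The boundedness of $\Upsilon_\psi$ into $\cS'(\rd)$ is what bridges the two; once it is in place, everything else is inherited from the Banach space structure of the mixed-norm spaces established in Section 2.1.
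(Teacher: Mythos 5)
Your argument is correct and follows exactly the route the paper intends: the corollary is stated as a consequence of the inversion formula of Theorem \ref{Invform} (with the details deferred to \cite{Bi2010}), and your proof is the standard implementation of that idea — isometric embedding into the complete space $L^{p_1,\dots,p_{2d}}$, continuity of the synthesis operator $\Upsilon_\psi$ into $\cS'(\rd)$ via mixed-norm H\"older duality, and identification of the limit by combining distributional convergence with a.e.\ convergence of a subsequence. No gaps; this is the intended proof.
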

We also have the expected duality result.
\begin{theorem}
	If $p_1,\dots, p_{2d}\in [1,\infty)$, $M(c)^{p'_1,\dots, p'_{2d}}$ is the dual space of $M(c)^{p_1,\dots, p_{2d}}$, where $p'_i\in (1,\infty]$ satisfies $1/p_i+1/p_i'=1$.
\end{theorem}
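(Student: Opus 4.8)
The plan is to realize each mixed modulation space $M(c)^{p_1,\dots,p_{2d}}$ as a \emph{retract} of the corresponding mixed-norm Lebesgue space $L^{p_1,\dots,p_{2d}}(\rdd)$ and then to transfer to it the duality of the latter, which by \cite{BP61} reads $(L^{p_1,\dots,p_{2d}})^*=L^{p'_1,\dots,p'_{2d}}$ precisely when all the exponents $p_i$ are finite. Fix a window $g\in M^1(\rd)$ normalized by $\langle g,g\rangle=1$, and consider the analysis operator $Wf=V_gf\circ\tilde c$, which by Definition \ref{2.2} is an isometry of $M(c)^{p_1,\dots,p_{2d}}$ into $L^{p_1,\dots,p_{2d}}(\rdd)$, together with the synthesis operator $\Upsilon_g$ of Theorem \ref{Invform}. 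By Theorem \ref{Invform}(i) one has $\Upsilon_g W=\Id$ on $M(c)^{p_1,\dots,p_{2d}}$, and $\Upsilon_g$ is bounded from $L^{p_1,\dots,p_{2d}}(\rdd)$ into $M(c)^{p_1,\dots,p_{2d}}$ thanks to the mapping properties established in \cite{Bi2010}. Thus $M(c)^{p_1,\dots,p_{2d}}$ is a complemented isometric subspace of $L^{p_1,\dots,p_{2d}}(\rdd)$.

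The easy inclusion $M(c)^{p'_1,\dots,p'_{2d}}\hookrightarrow (M(c)^{p_1,\dots,p_{2d}})^*$ follows from the orthogonality relations for the STFT. Indeed, for $f\in M(c)^{p_1,\dots,p_{2d}}$ and $h\in M(c)^{p'_1,\dots,p'_{2d}}$, the permutation $\tilde c$ being measure preserving, the identity $\langle f,h\rangle=\langle V_gf,V_gh\rangle_{L^2(\rdd)}=\langle Wf,Wh\rangle_{L^2(\rdd)}$ together with Hölder's inequality for mixed-norm spaces gives $|\langle f,h\rangle|\le \|f\|_{M(c)^{p_1,\dots,p_{2d}}}\|h\|_{M(c)^{p'_1,\dots,p'_{2d}}}$, so that $h\mapsto\langle\cdot,h\rangle$ is a bounded injective embedding.

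For the converse, take $\Lambda\in (M(c)^{p_1,\dots,p_{2d}})^*$. Then $\Lambda\circ\Upsilon_g$ is a bounded functional on $L^{p_1,\dots,p_{2d}}(\rdd)$, hence --- and this is the only place where the hypothesis $p_i<\infty$ for all $i$ enters --- by \cite{BP61} it is represented by a unique $F\in L^{p'_1,\dots,p'_{2d}}(\rdd)$, so that $(\Lambda\circ\Upsilon_g)(\Phi)=\langle\Phi,F\rangle_{L^2(\rdd)}$. Using $\Upsilon_g W=\Id$ we obtain, for every $f\in M(c)^{p_1,\dots,p_{2d}}$,
$$\Lambda(f)=(\Lambda\circ\Upsilon_g)(Wf)=\langle Wf,F\rangle_{L^2(\rdd)}.$$
Now set $h=\Upsilon_g F$, which belongs to $M(c)^{p'_1,\dots,p'_{2d}}$ by the boundedness of $\Upsilon_g$ on mixed-norm spaces. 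Since $\Upsilon_g$ is the formal adjoint of $W$, that is $\langle \Upsilon_g F,f\rangle=\langle F,Wf\rangle$ for all such $f$, one gets $\langle Wf,F\rangle=\langle f,h\rangle$, whence $\Lambda=\langle\cdot,h\rangle$. Combined with the estimate of the previous paragraph, this yields the identification $(M(c)^{p_1,\dots,p_{2d}})^*=M(c)^{p'_1,\dots,p'_{2d}}$.

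I expect the main obstacle to be the input underlying the retract structure, namely the boundedness of the synthesis operator $\Upsilon_g$ on the \emph{whole} mixed-norm space $L^{p_1,\dots,p_{2d}}(\rdd)$, and not merely on the range of $W$ where Theorem \ref{Invform} operates. In the unmixed case this is a routine consequence of Young-type convolution estimates, but in the mixed setting the order of integration interacts with the coordinate permutation $\tilde c$, and one must lean on the mapping properties proved in \cite{Bi2010}. The second delicate point, the identity $\langle Wf,F\rangle=\langle f,h\rangle$, is then purely formal once the adjoint relation $\langle\Upsilon_g F,f\rangle=\langle F,Wf\rangle$ has been verified; the finiteness assumption $p_i<\infty$ is confined to the representation of the dual of the mixed-norm space, an $L^\infty$ factor destroying the identification $(L^{p_i})^*=L^{p'_i}$.
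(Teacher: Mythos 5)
The paper gives no proof of this theorem: it is quoted from \cite{Bi2010}, where it is established by exactly the retract argument you propose. Your write-up is correct, and the only genuinely nontrivial input is the one you flag, namely the boundedness of $\Upsilon_g$ on all of $L^{p_1,\dots,p_{2d}}(\rdd)$ (not just on the range of $W$); this follows because the coordinate permutation commutes with convolution, $(u*k)\circ\tilde c=(u\circ\tilde c)*(k\circ\tilde c)$, so the pointwise majorization $|V_g(\Upsilon_g F)|\le |F\circ\tilde c^{-1}|*|V_gg|$ reduces the claim to Young's inequality for mixed-norm spaces together with $V_gg\in L^1$ for $g\in M^1$, after which the transference of the Benedek--Panzone duality through $\Upsilon_gW=\mathrm{Id}$ is routine.
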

\subsection{Gabor frames} Given a lattice $\Lambda$ in $\rdd$ and a non-zero square integrable function $g$ (called window) on $\rd$   the system
$$\mathcal{G}(g,\Lambda)=\{\pi(\lambda)\,:\,\lambda\in\Lambda\}
$$
is called a Gabor frame if it is a frame for $\lrd$, that is there exist constants $0<A\le  B$ such that
\begin{equation}\label{framedef}
A\|f\|_2^2\leq\sum_{\lambda\in \Lambda}|\la f,\pi(\lambda) g\ra|^2\leq B\|f\|_2^2,\quad \forall f \in\lrd.
\end{equation} 
If $\eqref{framedef}$ holds,
then there exists a  $\gamma\in\lrd$ (so-called dual window),
such that $\mathcal{G}(\gamma,\Lambda)$
is a frame for $\lrd$ and every $f\in\lrd$ can be expanded as
\begin{equation}\label{GabExp}
f=\sum_{\lambda\in\Lambda}\la f, \pi(\lambda)g \ra \pi(\lambda)\gamma= \sum_{\lambda\in\Lambda}\la f,\pi(\lambda)\gamma\ra \pi(\lambda) g,
\end{equation}
with unconditional convergence in $\lrd$.\par 
From now on we shall work with lattices $\Lambda=\a \bZ^{2d}$, for  suitable $\a>0$. 

If the window function belongs to the Feichtinger's algebra $M^1(\rd)$, then a Gabor frame for $\lrd$ is a frame also for mixed modulation spaces (cf. \cite[Theorem 4.6]{Bi2010}). 
\begin{theorem}\label{GM}
	Let $g$ be a window function in $M^1(\rd)$ such that $\mathcal{G}(g,\Lambda)$ is a frame for $L^2(\rd)$ with a dual frame $\mathcal{G}(\gamma,\Lambda)$. Consider $p_1,\dots,p_{2d} \in [1,\infty]$. Then 
	\begin{itemize}
	\item[(i)] There exist constants $0<A\le  B$ independent of $p_1,\dots,p_{2d}$ such that
	$$A\|f\|_{M(c)^{p_1,\dots, p_{2d}}}\leq \| V_g f\circ {\tilde{c}}|_{\Lambda}\|_{\ell^{p_1,\dots,p_{2d}}}\leq B\|f\|_{M(c)^{p_1,\dots, p_{2d}}},\quad \forall f \in{M(c)^{p_1,\dots, p_{2d}}}.
	$$
	\item[(ii)] We have
\begin{equation}\label{GabExp2}
f=\sum_{\lambda\in\Lambda}\la f, \pi(\lambda)g \ra \pi(\lambda)\gamma= \sum_{\lambda\in\Lambda}\la f,\pi(\lambda)\gamma\ra \pi(\lambda) g,
\end{equation}
with unconditional convergence in $M(c)^{p_1,\dots, p_{2d}}$ if 
$p_1,\dots,p_{2d}\in [1,\infty)$, weak* convergence in $M^\infty(\rd)$ if $p_1,\dots,p_{2d}\in [1,\infty]$.
	\end{itemize}
\end{theorem}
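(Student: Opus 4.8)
The plan is to read Theorem~\ref{GM} as the statement that $\mathcal{G}(g,\Lambda)$ is a \emph{Banach frame} for the entire scale $M(c)^{p_1,\dots,p_{2d}}$ simultaneously, with frame bounds that are blind to the indices. I would introduce the analysis operator $C_g\colon f\mapsto(\langle f,\pi(\lambda)g\rangle)_{\lambda\in\Lambda}$ and the synthesis operator $D_\gamma\colon(c_\lambda)_\lambda\mapsto\sum_\lambda c_\lambda\pi(\lambda)\gamma$, and reduce the whole statement to two facts: (a) $C_g$ maps $M(c)^{p_1,\dots,p_{2d}}$ boundedly into $\ell^{p_1,\dots,p_{2d}}$ and $D_\gamma$ maps $\ell^{p_1,\dots,p_{2d}}$ boundedly into $M(c)^{p_1,\dots,p_{2d}}$, with operator norms independent of $p_1,\dots,p_{2d}$; and (b) the reconstruction identity $D_\gamma C_g=\mathrm{Id}$ holds. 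Granting these, the upper bound in (i) is exactly the boundedness of $C_g$; the lower bound follows from $\|f\|_{M(c)^{p_1,\dots,p_{2d}}}=\|D_\gamma C_g f\|_{M(c)^{p_1,\dots,p_{2d}}}\le\|D_\gamma\|\,\|C_g f\|_{\ell^{p_1,\dots,p_{2d}}}$; and the identity $D_\gamma C_g f=f$ is precisely the expansion \eqref{GabExp2} of part (ii). The identity (b) itself I would first verify on the dense subspace $M^1(\rd)$, where it is the $L^2$ Gabor expansion \eqref{GabExp}, and then propagate to all of $M(c)^{p_1,\dots,p_{2d}}$ using the continuity in (a); the inversion formula of Theorem~\ref{Invform} is the continuous counterpart that underlies this extension.

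The engine for (a) is the pointwise convolution estimate for the STFT: fixing an auxiliary window $\phi\in M^1(\rd)$ with $\|\phi\|_2=1$, one has $|V_g f(w)|\le(|V_\phi f|*|V_g\phi|)(w)$ on $\rdd$. Since $g,\phi\in M^1(\rd)$, the kernel $|V_g\phi|$ lies in the Wiener amalgam space $W(L^\infty,L^1)(\rdd)$, hence in $L^1(\rdd)$. I would first observe that this convolution structure survives the permutation: as $\tilde c$ is linear with $|\det\tilde c|=1$, composing with $\tilde c$ yields $|V_g f\circ\tilde c|\le(|V_\phi f|\circ\tilde c)*(|V_g\phi|\circ\tilde c)$, and $|V_g\phi|\circ\tilde c$ is still a $W(L^\infty,L^1)(\rdd)$ kernel. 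Young's inequality in its mixed-norm form $\|F*K\|_{L^{p_1,\dots,p_{2d}}}\le\|K\|_{L^1}\,\|F\|_{L^{p_1,\dots,p_{2d}}}$ then controls the continuous norm by a constant $\|K\|_{L^1}$ that is \emph{independent} of the indices, which is the source of the index-freeness. The discrete-to-continuous passage is handled by the same mechanism: on the product lattice $\Lambda=\alpha\bZ^{2d}$ the sampled norm $\|V_g f\circ\tilde c|_\Lambda\|_{\ell^{p_1,\dots,p_{2d}}}$ is comparable to the continuous amalgam norm because the local oscillation of $V_g f\circ\tilde c$ is again dominated by convolution against an $\ell^1$, resp.\ $W(L^\infty,L^1)$, kernel built from $V_g g$; this is exactly where $g\in M^1(\rd)$, guaranteeing $V_g g\in W(L^\infty,L^1)(\rdd)$, is essential, and it is what keeps the sampling constants free of $p_1,\dots,p_{2d}$. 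Composing these estimates delivers both inequalities in (i) with some $0<A\le B$ as claimed, and, applied to $D_\gamma$, the boundedness of synthesis.

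For the convergence in part (ii) I would argue directly from (a) and (b). When $p_1,\dots,p_{2d}\in[1,\infty)$ the finitely supported sequences are dense in $\ell^{p_1,\dots,p_{2d}}$; since $C_g f\in\ell^{p_1,\dots,p_{2d}}$ by (i) and $D_\gamma$ is continuous by (a), applying $D_\gamma$ to the partial sums of $C_g f$ shows that \eqref{GabExp2} converges unconditionally in $M(c)^{p_1,\dots,p_{2d}}$, the two expansions being symmetric in $g$ and $\gamma$. When some index equals $\infty$ norm density fails, and the extension of (b) is only valid in a weaker sense: here I would pair the partial sums with an arbitrary $h\in M^1(\rd)$ and use the duality $M^\infty(\rd)=(M^1(\rd))'$ together with the absolute convergence of the reproducing pairing to conclude that the partial sums converge to $f$ in the weak* topology of $M^\infty(\rd)$.

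The step I expect to be the main obstacle is the index-independence of the constants together with the bookkeeping forced by the permutation $\tilde c$ and the iterated mixed norm: one must check that the convolution-domination estimate and the sampling lemma really pass through each of the nested integrations, respectively summations, in the order prescribed by $c$, so that $W(L^\infty,L^1)$ kernels act as convolvers on $L^{p_1,\dots,p_{2d}}$ and $\ell^{p_1,\dots,p_{2d}}$ with one and the same bound for all exponents. Because $\Lambda$ is a product lattice invariant under coordinate permutations and $L^1$-convolution is bounded on every rearrangement of a mixed norm with the same constant (an iterated application of Minkowski's integral inequality), this does go through; but it is precisely the point at which the classical single-exponent arguments of \cite[Ch.~11--12]{book} must be re-examined coordinate by coordinate, following \cite{Bi2010}.
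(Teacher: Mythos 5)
The paper does not actually prove Theorem \ref{GM}: it is quoted from \cite[Theorem 4.6]{Bi2010}, so there is no in-paper proof to compare against. Your proposal reconstructs precisely the standard argument that Bishop's proof follows (the mixed-norm, permuted-coordinate version of Gr\"ochenig's Chapter 12 treatment: change-of-window convolution domination, Young's inequality $L^1 * L^{p_1,\dots,p_{2d}}\to L^{p_1,\dots,p_{2d}}$ with constant $1$ and hence index-free bounds, amalgam-space sampling on the product lattice, and $D_\gamma C_g=\mathrm{Id}$ extended from a dense, respectively weak*-dense, subspace), and it is sound, including the observation that the convolution structure survives composition with the measure-preserving linear map $\tilde{c}$. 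The one point you leave implicit is that the synthesis bounds require the dual window $\gamma$ to belong to $M^1(\rd)$, not merely to $L^2(\rd)$; for the canonical dual of an $M^1$ window this is guaranteed by the Gr\"ochenig--Leinert theorem, and it is the hypothesis under which the cited result is stated.
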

This generalizes the basic fact, valid for the identity permutation, that the coefficient operator $C_g$ and the reconstruction operator $D_\gamma$, defined by
 \begin{equation}\label{DC0}
 (C_g f)_\lambda:=\la f, \pi(\lambda)g\ra \qquad
  D_\gamma c:=\sum_{\lambda\in \Lambda} c_\lambda \pi(\lambda)\gamma,
  \end{equation} satisfy  
  \begin{equation}\label{DC}
    C_g\,: M^{p,q}(\rd)\, \to \ell^{p,q}(\Lambda), \quad   D_\gamma:\, \ell^{p,q}(\Lambda) \to M^{p,q}(\rd),\end{equation}
  with $D_\gamma C_g=I$, the identity on $M^{p,q}(\rd)$, cf.\ \eqref{GabExp2}, and similarly if we interchange the windows $g$ and $\gamma$.
	 \section{Kernel Theorems}
In this section we will present the characterization of continuity properties on  modulation spaces of integral  operators \eqref{IOP}.\par
We need the following result (see e.g.\ Tao's Lecture Notes \cite[Proposition 5.2]{Tao}).\begin{proposition}\label{Prop5.2}
	Assume that $(X,\nu_X)$, $(Y,\mu_Y)$ are measure space and $p\in [1,\infty]$. Suppose that $K:X\times Y\to \bC$, and   $\|K(\cdot,y)\|_{L^p(X)} $ is uniformly bounded on $Y$.
	Then the operator $A$ with kernel $K$  maps $L^1(X)$ into $L^p(Y)$ and
	\begin{equation}\label{normk}
	\|A\|_{L^1(X)\to L^p(Y)}=\essupp_{y\in Y} \|K(\cdot,y)\|_{L^p(X)}.
	\end{equation}
\end{proposition}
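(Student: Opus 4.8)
The plan is to establish the two inequalities whose combination gives the norm identity \eqref{normk}, taking $A$ to be the integral operator with kernel $K$, $Af(x)=\int K(x,y)\,f(y)\,d\mu_Y(y)$, so that the output is measured in $L^p(X)$ and $f$ is integrated in the $y$ variable. The upper bound is the routine half. First I would, for $1\le p<\infty$, apply Minkowski's integral inequality to pull the $L^p(X)$ norm inside the defining integral, obtaining $\norm{Af}_{L^p(X)}\le\int\norm{K(\cdot,y)}_{L^p(X)}\,\abs{f(y)}\,d\mu_Y(y)\le\big(\essupp_{y\in Y}\norm{K(\cdot,y)}_{L^p(X)}\big)\norm{f}_{L^1}$. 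The endpoint $p=\infty$ is even simpler, since $\abs{Af(x)}\le\essupp_{y}\abs{K(x,y)}\,\norm{f}_{L^1}$ pointwise, whence $\norm{Af}_{L^\infty(X)}\le\essupp_{y}\norm{K(\cdot,y)}_{L^\infty(X)}\norm{f}_{L^1}$. Thus the finiteness of $\essupp_{y}\norm{K(\cdot,y)}_{L^p(X)}$ is precisely what secures the $L^1\to L^p$ boundedness, together with the estimate $\norm{A}\le\essupp_{y}\norm{K(\cdot,y)}_{L^p(X)}$.

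For the reverse inequality $\norm{A}\ge\essupp_{y}\norm{K(\cdot,y)}_{L^p(X)}$, where the sharpness (and not a mere estimate) is at stake, I would argue by duality for $1<p<\infty$. Pairing $Af$ against $g$ in the unit ball of $L^{p'}(X)$ and using Fubini gives $\la Af,g\ra=\int_Y f(y)\,G(y)\,d\mu_Y(y)$ with $G(y):=\la K(\cdot,y),g\ra$. Since $\abs{\la Af,g\ra}\le\norm{A}\,\norm{f}_{L^1}$ for every admissible $f$, the duality $(L^1)^\ast=L^\infty$ forces $\norm{G}_{L^\infty(Y)}\le\norm{A}$, that is $\abs{\la K(\cdot,y),g\ra}\le\norm{A}$ for a.e.\ $y$. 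Because $L^{p'}(X)$ is separable, I would then fix a countable norming family $\{g_k\}$ in its unit ball; discarding a countable union of null sets yields $\norm{K(\cdot,y)}_{L^p(X)}=\sup_k\abs{\la K(\cdot,y),g_k\ra}\le\norm{A}$ for a.e.\ $y$, and taking the essential supremum in $y$ closes the argument.

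I expect the main obstacle to be the reverse inequality at the endpoints $p=1$ and $p=\infty$ in a genuinely abstract measure space, where the separability device fails ($L^\infty$ is not separable) and there are no point masses available to test against. In the settings actually needed below, where $Y$ is Euclidean or discrete and hence carries a differentiation basis, this is circumvented cleanly: testing $A$ on approximate point masses $f_n=\mu_Y(B_n)\inv\mathbf 1_{B_n}$, with $B_n$ shrinking to a Lebesgue point $y_0$ of $y\mapsto K(\cdot,y)$, gives $Af_n\to K(\cdot,y_0)$ in $L^p(X)$, so that Fatou yields $\norm{K(\cdot,y_0)}_{L^p(X)}\le\liminf_n\norm{Af_n}_{L^p(X)}\le\norm{A}$, and an essential supremum over $y_0$ finishes. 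Under mild hypotheses such as $\sigma$-finiteness these two routes cover every $p\in[1,\infty]$, and matching them against the Minkowski upper bound produces the equality \eqref{normk}.
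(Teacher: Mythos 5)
Your proposal is correct, but note that the paper does not actually prove this proposition: it is quoted verbatim from Tao's lecture notes \cite{Tao} and used as a black box, so there is no internal proof to compare against. Your argument is the standard one and matches the cited source in spirit: Minkowski's integral inequality for the upper bound $\norm{A}_{L^1\to L^p}\le \essupp_{y}\norm{K(\cdot,y)}_{L^p(X)}$, and a sharpness argument for the reverse inequality. The two routes you offer for the lower bound (duality against a countable norming family in the unit ball of $L^{p'}(X)$ for $1<p<\infty$, and testing on normalized indicators $\mu_Y(B_n)^{-1}\mathbf 1_{B_n}$ shrinking to a Lebesgue point) are both sound under the mild hypotheses you name ($\sigma$-finiteness, separability of $L^p(X)$, a differentiation basis on $Y$); these also guarantee the measurability of $y\mapsto\norm{K(\cdot,y)}_{L^p(X)}$, which the bare statement leaves implicit. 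You are right to flag that the equality could be delicate for a completely abstract measure space, but this caveat is immaterial here: in the only places the proposition is invoked (the proof of Theorem \ref{teo1}), $X$ and $Y$ are finite index sets with counting measure, where the reverse inequality follows by testing $A$ on the delta sequences $f=\delta_{y_0}$, exactly the degenerate case of your approximate-point-mass argument, and the authors themselves remark that ``the assumptions are trivially satisfied'' there.
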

Similarly, we have a dual statment (see e.g.\ \cite[Proposition 5.4]{Tao}).
\begin{proposition}\label{Prop5.4}
	Assume that $(X,\nu_X)$, $(Y,\mu_Y)$ are measure space and $p\in [1,\infty]$. Suppose that $K:X\times Y\to \bC$, and   $\|K(x,\cdot)\|_{L^{p'}(Y)} $ is uniformly bounded on $Y$.
	Then the operator $A$ with kernel $K$  maps $L^p(X)$ into $L^\infty(Y)$ and
	\begin{equation}\label{normk2}
	\|A\|_{L^p(X)\to L^\infty(Y)}=\essupp_{x\in X} \|K(x,\cdot)\|_{L^{p'}(Y)}.
	\end{equation}
\end{proposition}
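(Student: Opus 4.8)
The plan is to separate the elementary upper bound from the sharpness assertion, isolating the single genuinely delicate point: the interchange of an essential supremum with a supremum over the unit ball of $L^p$. First I would prove the inequality $\le$ in \eqref{normk2}. Writing the operator as $Af(x)=\int_Y K(x,y)f(y)\,d\mu_Y(y)$ and applying H\"older's inequality in the $y$ variable, for a.e.\ fixed $x$, gives $\abs{Af(x)}\le \norm{K(x,\cdot)}_{L^{p'}(Y)}\,\norm{f}_{L^p(Y)}\le M\,\norm{f}_{L^p(Y)}$, where $M:=\essupp_{x\in X}\norm{K(x,\cdot)}_{L^{p'}(Y)}$. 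Taking the essential supremum over $x$ yields $\norm{Af}_{L^\infty(X)}\le M\norm{f}_{L^p(Y)}$; in particular the standing hypothesis that $\norm{K(x,\cdot)}_{L^{p'}(Y)}$ be essentially bounded already guarantees that $A$ is well defined and bounded into $L^\infty$.

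The reverse inequality is the only substantial step, and it is where I expect the main obstacle to lie. For each fixed $x$ H\"older's inequality is sharp, so $\sup_{\norm{f}_{L^p}\le1}\abs{Af(x)}=\norm{K(x,\cdot)}_{L^{p'}(Y)}=:g(x)$; the difficulty is that the optimal $f$ depends on $x$, that no single $f$ can realise this value simultaneously at every output point, and that in an abstract measure space the value of $Af$ at an individual point says nothing about $\norm{Af}_{L^\infty}$. The plan to overcome this, under the mild assumption that $Y$ is $\sigma$-finite and $1\le p<\infty$ (so that $L^p(Y)$ is separable), is to fix a countable dense subset $\{f_n\}$ of the closed unit ball of $L^p(Y)$. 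Since $f\mapsto Af(x)$ is, for a.e.\ $x$, a bounded linear functional on $L^p(Y)$ of norm $g(x)$, continuity and density give $g(x)=\sup_n\abs{Af_n(x)}$ for a.e.\ $x$. The crucial gain is that this is now a \emph{countable} supremum of measurable functions: the exceptional null sets for the bounds $\abs{Af_n(x)}\le\norm{Af_n}_{L^\infty(X)}$ can be gathered into a single null set, so that $M=\norm{g}_{L^\infty(X)}=\norm{\sup_n\abs{Af_n}}_{L^\infty(X)}=\sup_n\norm{Af_n}_{L^\infty(X)}\le\norm{A}$. Combined with the upper bound this gives the equality \eqref{normk2}.

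As a conceptually cleaner alternative, and to dispose of the endpoint $p=\infty$ not covered by the separability argument, I would observe that the present operator is the Banach-space adjoint of the operator treated in Proposition \ref{Prop5.2}, with the roles of the two variables exchanged; since passing to the adjoint preserves the operator norm and $(L^{p'})^*=L^p$ for $1\le p'<\infty$, the identity \eqref{normk2} follows directly from \eqref{normk} for every $1<p\le\infty$. The remaining endpoint $p=1$ is in any case elementary, since there $M=\norm{K}_{L^\infty(X\times Y)}$ and the choice $f=\overline{\operatorname{sgn}K(x_0,\cdot)}$ makes the H\"older bound sharp.
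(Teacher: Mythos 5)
The paper does not prove this proposition at all: it is quoted from Tao's lecture notes \cite{Tao} and used as a black box, so there is no internal argument to compare against. Judged on its own terms, your proof is essentially sound. The upper bound via H\"older is exactly right, and you correctly identify the real issue in the converse, namely that the extremizing $f$ depends on $x$ and that pointwise values of $Af$ do not control $\norm{Af}_{L^\infty}$; reducing the essential supremum to a \emph{countable} supremum over a dense sequence in the unit ball, and then commuting $\essupp$ with a countable sup by collecting null sets, is a correct and standard way out. Two caveats. First, your parenthetical justification of separability is wrong: $\sigma$-finiteness of $(Y,\mu_Y)$ does \emph{not} imply that $L^p(Y)$ is separable (a probability space with a non-countably-generated $\sigma$-algebra already defeats it); you need the measure algebra of $Y$ to be countably generated. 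This is harmless for the paper, where $Y=\Lambda$ is a countable lattice with counting measure and $\ell^p(\Lambda)$ is separable, but it should be stated as a hypothesis rather than derived. Second, your treatment of the endpoint $p=1$ by choosing $f=\overline{\operatorname{sgn}K(x_0,\cdot)}$ does not work as written: that function need not lie in $L^1(Y)$, and even after truncation and normalization it only controls $Af$ at the single point $x_0$, which is precisely the obstruction you flagged two sentences earlier. The clean fix is to run the duality argument at the level of the bilinear form $\sup_{\norm{g}_{L^1}\le 1}\sup_{\norm{f}_{L^p}\le 1}\absl\int\!\!\int K(x,y)f(y)g(x)\,d\mu_Y\,d\nu_X\absr$, which reduces the claim to Proposition \ref{Prop5.2} for \emph{all} $1\le p\le\infty$ (for $p=1$ one only needs $\sup_{\norm{f}_{L^1}\le1}\abs{\int fh}=\norm{h}_{L^\infty}$, valid on semifinite spaces, rather than the false duality $(L^\infty)^*=L^1$). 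With that adjustment your second, adjoint-based route alone proves the whole statement and is arguably the argument the authors had in mind when pairing Propositions \ref{Prop5.2} and \ref{Prop5.4}.
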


Let us introduce the permutation $c_1$ of the set $\{1,\dots, 4d\}$ with related $\tilde{c}_1$ given by, for $x_i\in\rd$, $i=1,\dots,4$,
\begin{equation}\label{Dh}\tilde{c}_1(x_1,x_2, x_3,x_{4})= 
\begin{pmatrix} I_d&0_d&0_d&0_d\\0_d&0_d&I_d&0_d\\
0_d&I_d&0_d&0_d\\
0_d&0_d&0_d&I_d\end{pmatrix}\,\begin{pmatrix} x_1\\
x_{2}\\x_{3}\\  x_{4}
\end{pmatrix}=(x_1,x_3,x_2,x_4).
\end{equation}
Recall that we only consider lattices of the form $\Lambda=\a \zdd$.
\begin{theorem}\label{teo1} Suppose $p\in [1,\infty]$. 	Let $g$ be a window function in $\cS(\rd)$ such that $\mathcal{G}(g,\Lambda)$ is a frame for $L^2(\rd)$ with a dual frame $\mathcal{G}(\gamma,\Lambda)$. A linear continuous operator $A:\cS(\rd)\to \cS'(\rd)$ is bounded $M^1(\rd)\to M^p(\rd)$ if and only if its distrinution kernel $K$ satisfies 
	\begin{equation}\label{kernel1}
	V_G K\circ \tilde{c_1}\in \ell^{p,\infty}(\Lambda\times\Lambda),
	\end{equation}
	where $G=g\otimes \bar{\gamma}$.\par Moreover 
	\[
	\|A\|_{M^1\to M^p}\asymp \|V_G K\circ \tilde{c_1}\|_{\ell^{p,\infty}(\Lambda\times\Lambda)}.
	\]
	\end{theorem}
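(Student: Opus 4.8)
The plan is to discretize $A$ by means of the Gabor frame $\mathcal{G}(g,\Lambda)$ and its dual $\mathcal{G}(\gamma,\Lambda)$, reducing the boundedness of $A:M^1(\rd)\to M^p(\rd)$ to the boundedness of its Gabor matrix as an operator $\ell^1(\Lambda)\to\ell^p(\Lambda)$, and then to read off that norm from Proposition \ref{Prop5.2}. The core of the argument is the identification, up to a harmless reflection in the frequency variables, of the sampled STFT $V_GK\circ\tilde c_1$ on $\Lambda\times\Lambda$ with the Gabor matrix entries
\[
M_{\lambda,\mu}=\la A\pi(\mu)\gamma,\pi(\lambda)g\ra,\qquad \lambda,\mu\in\Lambda .
\]

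First I would establish this identification from the definitions. Writing $\la Af,h\ra=\la K,h\otimes\bar f\ra$ and expanding the tensor-product \tfs\ $\pi(z,w)(g\otimes\bar\gamma)$ on $\rdd$, a short computation yields, for $z=(x_1,\xi_1)$ and $w=(x_2,\xi_2)$,
\[
V_GK(x_1,x_2,\xi_1,\xi_2)=\la A\,\pi(x_2,-\xi_2)\gamma,\ \pi(x_1,\xi_1)g\ra .
\]
The permutation $\tilde c_1$ of \eqref{Dh} reorders the arguments from $(x_1,x_2,\xi_1,\xi_2)$ to the grouped form $(x_1,\xi_1,x_2,\xi_2)$, so that sampling $V_GK\circ\tilde c_1$ on $\Lambda\times\Lambda$ amounts to evaluating the right-hand side at $\lambda=(x_1,\xi_1)\in\Lambda$ and $\mu=(x_2,\xi_2)\in\Lambda$. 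Since $\Lambda=\a\zdd$ is invariant under the frequency reflection $(x_2,\xi_2)\mapsto(x_2,-\xi_2)$, relabelling the second index leaves every $\ell^{p,\infty}$ norm unchanged, so that, up to this relabelling, the sampled array is the matrix $(M_{\lambda,\mu})$.

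Next I would set up the discretization. By Theorem \ref{GM}(ii) the reconstruction identity $D_\gamma C_g=\mathrm{Id}$ holds on both $M^1(\rd)$ and $M^p(\rd)$, with $C_g$, $D_\gamma$ as in \eqref{DC0}; this gives the intertwining relations $A=D_\gamma\,M\,C_g$ and $M=C_g\,A\,D_\gamma$, where $M$ denotes the above matrix acting on sequences. Since $C_g$ and $D_\gamma$ are bounded between the relevant modulation and sequence spaces with constants depending only on the frame bounds (Theorem \ref{GM}(i)), these relations yield
\[
\|A\|_{M^1\to M^p}\asymp\|M\|_{\ell^1(\Lambda)\to\ell^p(\Lambda)} .
\]
Finally, Proposition \ref{Prop5.2}, specialized to the counting measure on $\Lambda$, expresses the $\ell^1\to\ell^p$ norm of a matrix as the supremum over its columns (indexed by the input variable $\mu$) of their $\ell^p$-norms (over the output variable $\lambda$); this is exactly the $\ell^{p,\infty}(\Lambda\times\Lambda)$ norm appearing in \eqref{kernel1}, in which the index $p$ is attached to $\lambda$ and the index $\infty$ to $\mu$. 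Combining the three steps proves both the stated equivalence and the quantitative estimate $\|A\|_{M^1\to M^p}\asymp\|V_GK\circ\tilde c_1\|_{\ell^{p,\infty}(\Lambda\times\Lambda)}$.

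The main obstacle I anticipate is the rigorous justification of the intertwining relations, since $A$ is a priori only continuous $\cS(\rd)\to\cS'(\rd)$: one must make sure that $A\,D_\gamma$ is well defined and that the Gabor series may be commuted with $A$. I would handle this by taking the dual window $\gamma$ in $\cS(\rd)$ (so that $G=g\otimes\bar\gamma\in\cS(\rdd)$ and all pairings are the genuine $\cS'$--$\cS$ duality), by invoking the unconditional and weak-$\ast$ convergence of the Gabor expansions from Theorem \ref{GM}(ii), and by first verifying the identities on the dense subspace $\cS(\rd)$ before extending by continuity. Once the reflection symmetry of $\Lambda$ is noted, the remaining work — the tensor STFT computation and the matching of $\tilde c_1$ with the mixed-norm ordering — is routine.
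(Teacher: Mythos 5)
Your proposal is correct and follows essentially the same route as the paper: the identity $V_GK\circ\tilde c_1(\lambda_1,\lambda_2,\mu_1,-\mu_2)=\la A\pi(\mu)\gamma,\pi(\lambda)g\ra$, the factorizations $A=D_\gamma\tilde AC_g$ and $\tilde A=C_gAD_\gamma$, and Proposition \ref{Prop5.2} to identify the $\ell^1\to\ell^p$ matrix norm with the $\ell^{p,\infty}(\Lambda\times\Lambda)$ norm of the sampled STFT. The only cosmetic difference is in the converse direction, where the paper runs the argument through finite-dimensional truncations $P_NC_gAD_\gamma|_{E_N}$ to sidestep the domain issues you flag at the end, whereas you propose to handle them via a Schwartz dual window and density; both are legitimate.
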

\begin{proof} Assume first \eqref{kernel1}.
For $\lambda=(\lambda_1,\lambda_2), \mu=(\mu_1,\mu_2)\in \Lambda$, observe that the Gabor matrix of the operator $A$ is given by
\begin{align}\label{aggiunta}
K_{\lambda,\mu}&:= \la A \pi(\mu)\gamma, \pi(\lambda)g\ra\nonumber\\
&=\la K, \pi(\lambda_1,\lambda_2)g\otimes \pi(\mu_1,-\mu_2)\bar{\gamma}\ra\nonumber\\
&=V_G K\circ \tilde{c}_1(\lambda_1,\lambda_2,\mu_1,-\mu_2).
\end{align}
This matrix turns out to be the integral kernel of the operator $\tilde{A}:=C_g A D_\gamma$, acting on sequences on $\Lambda$, where the coefficient operator $C_g$ and  synthesis operator $D_\gamma$ are defined in \eqref{DC0}. 
By \eqref{kernel1} and Proposition \ref{Prop5.2}, $\tilde{A}$ is a bounded operator $\ell^1(\Lambda)\to \ell^p(\Lambda)$. 
Since $C_g: M^p(\rd)\to \ell^p (\Lambda)$ and $D_\gamma: \ell^p(\Lambda)\to M^{p}(\rd)$ continuously, we have $A=D_\gamma \tilde{A} C_\gamma: M^1(\rd)\to M^p(\rd)$, with 
\[
\|A\|_{M^1\to M^p}\leq C \sup_{\mu\in\Lambda} \|K_{\cdot, \mu}\|_{\ell^p(\Lambda)}.
\]
\par
Vice-versa, consider the finite dimensional space
$$E_N =\{(a_\lambda)_{\lambda\in\Lambda}\,:\, a_\lambda=0\,\,\mbox{for}\,\, |\lambda|>N\}
$$
and the natural projection $P_N:\,\ell^\infty(\Lambda)\to E_N$. Consider the composition operator 
\begin{equation}\label{En}
P_N C_gAD_\gamma|_{E_N}: (E_N,\|\cdot\|_{\ell^1})\to(E_N,\|\cdot\|_{\ell^p}).
\end{equation} 
This operator is represented by the (finite) matrix 
$$ \{K_{\lambda,\mu}\}_{\lambda,\mu\in \Lambda, |\lambda|\leq N, |\mu|\leq N}.
$$
Using formula \eqref{normk} in Proposition \ref{Prop5.2} (we work on finite sets, so that the assumptions are trivially satisfied) we have
\begin{equation*}
\sup_{\mu\in\Lambda, |\mu|\leq N}  \|K_{\lambda,\mu}\|_{\ell^p(\Lambda, |\lambda|\leq N)}=\| P_N C_gAD_\gamma|_{E_N}\|_{\ell^1\to \ell^p}\leq \|C_g\|\|D_\gamma\| \|A\|_{M^1\to M^p}.
\end{equation*}
Since the right-hand side is independent of $N$, taking the supremum with respect to $N$, we obtain 
\[
\sup_{\mu\in\Lambda} \|K_{\cdot, \mu}\|_{\ell^p(\Lambda)}\leq C \|A\|_{M^1\to M^p}.
\]
This concludes the proof.
\end{proof}

We observe that, given a frame $\mathcal{G}(g,\Lambda)$ for $\lrd$, $\Lambda=\alpha \bZ^{2d}$, with dual frame $\mathcal{G}(\gamma,\Lambda)$, 
 then $\mathcal{G}(\bar{\gamma},\Lambda)$ is still a frame for $\lrd$
 and $\mathcal{G}(g\otimes \bar{\gamma},\Lambda\times \Lambda)$ is a frame for $\lrdd$. Using Theorem \ref{GM} we can therefore reformulate the previous result in terms of mixed modulation spaces.
 
 \begin{corollary}\label{cor1} Suppose $p\in [1,\infty]$. A linear continuous operator $A: \cS(\rd)\to \cS'(\rd)$ is bounded from $M^1(\rd)$ into $M^p(\rd)$ if and only if its distribution kernel $K$ satisfies 
 	\begin{equation}\label{kernel2}
 	 K\in M(c_1)^{p,\infty},
 	\end{equation}
 	where $c_1$ is identified with \eqref{Dh}.\par
	Moreover 
	\[
	\|A\|_{M^1\to M^p}\asymp \|K\|_{M(c_1)^{p,\infty}}.
	\]
 \end{corollary}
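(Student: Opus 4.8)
The plan is to deduce the Corollary directly from Theorem \ref{teo1}, by reading the discrete condition \eqref{kernel1} as a statement about the mixed modulation norm of $K$; the bridge is the Gabor-frame sampling result of Theorem \ref{GM}, applied on the phase space $\R^{4d}$ of $\rdd$.

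First I would record the two admissibility facts already noted before the statement. Since $g,\gamma\in M^1(\rd)$ and the Feichtinger algebra is closed under tensor products, the window $G=g\otimes\bar{\gamma}$ lies in $M^1(\rdd)$; and, because $\mathcal{G}(g,\Lambda)$ is a frame with dual $\mathcal{G}(\gamma,\Lambda)$, the system $\mathcal{G}(\bar{\gamma},\Lambda)$ is again a frame, whence $\mathcal{G}(G,\Lambda\times\Lambda)$ is a frame for $\lrdd$ (and, being a frame, admits a dual). This is exactly the input needed to invoke Theorem \ref{GM}.

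Next I would apply Theorem \ref{GM}(i), with $d$ replaced by $2d$, the window $G$, the permutation $c_1$ of $\{1,\dots,4d\}$ from \eqref{Dh}, and the index vector whose first $2d$ entries equal $p$ and whose last $2d$ entries equal $\infty$. By Definition \ref{2.2} the middle and outer terms of the resulting estimate are the sampled norm and $\|K\|_{M(c_1)^{p,\infty}}$, so Theorem \ref{GM} furnishes constants $0<A\le B$ with
\[
A\|K\|_{M(c_1)^{p,\infty}}\le \|V_G K\circ\tilde{c}_1|_{\Lambda\times\Lambda}\|_{\ell^{p,\infty}}\le B\|K\|_{M(c_1)^{p,\infty}}.
\]
The only point requiring care is to match this sampled norm with the quantity in \eqref{kernel1}: by \eqref{aggiunta} the samples of $V_G K\circ\tilde{c}_1$ over $\Lambda\times\Lambda$ coincide, after the reflection $\mu_2\mapsto-\mu_2$, with the Gabor matrix entries $K_{\lambda,\mu}$. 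Since $\Lambda=\alpha\zdd$ is symmetric and this reflection acts only within the second $\Lambda$-factor, it is a bijection of the index set that preserves the $\ell^{p,\infty}$ grouping (the first $2d$ lattice coordinates carry $\ell^p$, the last $2d$ carry $\ell^\infty$); hence the sampled norm equals $\|V_G K\circ\tilde{c}_1\|_{\ell^{p,\infty}(\Lambda\times\Lambda)}$ in the sense of \eqref{kernel1}.

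Finally I would chain the equivalences. Theorem \ref{teo1} asserts that $A$ is bounded $M^1(\rd)\to M^p(\rd)$ if and only if $V_G K\circ\tilde{c}_1\in\ell^{p,\infty}(\Lambda\times\Lambda)$, with $\|A\|_{M^1\to M^p}\asymp\|V_G K\circ\tilde{c}_1\|_{\ell^{p,\infty}}$; combining this with the displayed two-sided estimate yields both the characterization $K\in M(c_1)^{p,\infty}$ of \eqref{kernel2} and the norm equivalence $\|A\|_{M^1\to M^p}\asymp\|K\|_{M(c_1)^{p,\infty}}$. I expect the only real obstacle to be \emph{bookkeeping} rather than analysis: checking that the frame-sampling norm of Theorem \ref{GM} is literally the Gabor-matrix $\ell^{p,\infty}$ norm of Theorem \ref{teo1} — in particular that the reordering induced by $c_1$ sends the $\lambda$-variables to the $\ell^p$ slot and the $\mu$-variables to the $\ell^\infty$ slot, and that the sign flip on $\mu_2$ is harmless — since the genuine analytic content already resides in Theorems \ref{teo1} and \ref{GM}.
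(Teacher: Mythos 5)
Your proposal is correct and follows essentially the same route as the paper, which obtains the corollary by observing that $\mathcal{G}(g\otimes\bar{\gamma},\Lambda\times\Lambda)$ is a Gabor frame for $L^2(\rdd)$ with window in $M^1(\rdd)$ and then invoking Theorem \ref{GM} to translate the sampled condition \eqref{kernel1} of Theorem \ref{teo1} into the norm equivalence with $\|K\|_{M(c_1)^{p,\infty}}$. Your additional bookkeeping (the harmless reflection $\mu_2\mapsto-\mu_2$ on the symmetric lattice and the matching of the $\ell^p$/$\ell^\infty$ slots under $\tilde{c}_1$) is exactly the detail the paper leaves implicit.
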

 
 \begin{remark}
 	When $p=\infty$, condition \eqref{kernel2} becomes $K\in M^\infty(\rdd)$ and we recapture Feichtinger's kernel Theorem \cite{gei1}.
 \end{remark}
 
 A similar characterization of bounded operators $A: M^p(\rd) \to M^{\infty}(\rd)$ can be obtained arguing as above, using Proposition \ref{Prop5.4}, \eqref{aggiunta} and Theorem \ref{GM}. Precisely,
 define the permutation $c_2$ of the set $\{1,\dots, 4d\}$ with related $\tilde{c}_2$  by
 \begin{equation}\label{Dh2}\tilde{c}_2(x_1,x_2, x_3,x_{4})= 
 \begin{pmatrix} 0_d&0_d&I_d&0_d\\I_d&0_d&0_d&0_d\\
 0_d&0_d&0_d&I_d\\
 0_d&I_d&0_d&0_d\end{pmatrix}\,\begin{pmatrix} x_1\\
 x_{2}\\x_{3}\\  x_{4}
 \end{pmatrix}=(x_3,x_1,x_4,x_2),
 \end{equation}
  for $x_i\in\rd$, $i=1,\dots,4$.
  
  \begin{theorem}\label{teo2} Suppose $p\in [1,\infty]$.  A linear continuous operator $A: \cS(\rd)\to \cS'(\rd)$ is bounded from $M^p(\rd)$ into $M^\infty(\rd)$ if and only if its distribution kernel $K$ satisfies 
  		\begin{equation}\label{kernel3}
  		K\in M(c_2)^{p',\infty},
  		\end{equation}
  		where $1/p+1/p'=1$.\par
		Moreover 
		\[
		\|A\|_{M^p\to M^\infty}\asymp \|K\|_{M(c_2)^{p',\infty}}.
		\]
		
  \end{theorem}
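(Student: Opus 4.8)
The plan is to transcribe the proof of Theorem~\ref{teo1} almost verbatim, with the single structural change of replacing Proposition~\ref{Prop5.2} by its dual Proposition~\ref{Prop5.4}. Since Proposition~\ref{Prop5.4} characterizes $L^p\to L^\infty$ boundedness through uniform control of $\|K(x,\cdot)\|_{L^{p'}}$ rather than $\|K(\cdot,y)\|_{L^p}$, the roles of the two kernel variables, and hence the exponents attached to them, get interchanged; this is exactly what forces the permutation $c_2$ of \eqref{Dh2} in place of $c_1$.

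First I would reuse the Gabor matrix already computed in \eqref{aggiunta}, namely $K_{\lambda,\mu}=\langle A\pi(\mu)\gamma,\pi(\lambda)g\rangle=V_G K(\lambda_1,\mu_1,\lambda_2,-\mu_2)$ for $\lambda=(\lambda_1,\lambda_2),\ \mu=(\mu_1,\mu_2)\in\Lambda$, which is the integral kernel of the discrete operator $\tilde A:=C_gAD_\gamma$. The crucial bookkeeping step is to read off $\tilde c_2$ in \eqref{Dh2}: one checks that $V_G K\circ\tilde c_2$, sampled on $\Lambda\times\Lambda$ and indexed by $(\mu_1,\mu_2,\lambda_1,\lambda_2)$ (the sign of $\mu_2$ being immaterial because $\Lambda=\alpha\zdd$ is symmetric), equals $K_{\lambda,\mu}$. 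With the exponent convention of $M(c_2)^{p',\infty}$ the first two blocks $(\mu_1,\mu_2)$ carry the exponent $p'$ and the last two $(\lambda_1,\lambda_2)$ carry $\infty$, so that $\|V_G K\circ\tilde c_2|_{\Lambda\times\Lambda}\|_{\ell^{p',\infty}(\Lambda\times\Lambda)}=\sup_{\lambda}\|K_{\lambda,\cdot}\|_{\ell^{p'}(\Lambda)}$, which is precisely the quantity controlled by Proposition~\ref{Prop5.4}.

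For the ``if'' direction, assuming $K\in M(c_2)^{p',\infty}$, I would apply Theorem~\ref{GM} to the distribution $K$ on $\rdd$ with window $G=g\otimes\bar\gamma\in M^1(\rdd)$ and frame $\mathcal{G}(G,\Lambda\times\Lambda)$, converting the hypothesis into $\sup_\lambda\|K_{\lambda,\cdot}\|_{\ell^{p'}(\Lambda)}<\infty$. The discrete form of Proposition~\ref{Prop5.4} then gives $\tilde A:\ell^p(\Lambda)\to\ell^\infty(\Lambda)$, with norm equal to this supremum. Factoring through the frame via $A=D_\gamma\tilde A C_g$ (legitimate since $D_\gamma C_g=\Id$ on every $M^p$, including weak$^*$ on $M^\infty$), and using the boundedness $C_g:M^p\to\ell^p$ and $D_\gamma:\ell^\infty\to M^\infty$ from \eqref{DC}, I obtain $A:M^p\to M^\infty$ with $\|A\|_{M^p\to M^\infty}\lesssim\sup_\lambda\|K_{\lambda,\cdot}\|_{\ell^{p'}(\Lambda)}$.

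For the converse I would copy the truncation argument of Theorem~\ref{teo1}: the operator $P_NC_gAD_\gamma|_{E_N}\colon(E_N,\|\cdot\|_{\ell^p})\to(E_N,\|\cdot\|_{\ell^\infty})$ is represented by the finite section $\{K_{\lambda,\mu}\}_{|\lambda|,|\mu|\le N}$, and on finite sets the hypotheses of Proposition~\ref{Prop5.4} hold automatically, so \eqref{normk2} yields $\sup_{|\lambda|\le N}\|K_{\lambda,\cdot}\|_{\ell^{p'}(|\mu|\le N)}=\|P_NC_gAD_\gamma|_{E_N}\|_{\ell^p\to\ell^\infty}\le\|C_g\|\,\|D_\gamma\|\,\|A\|_{M^p\to M^\infty}$. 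Letting $N\to\infty$ bounds $\sup_\lambda\|K_{\lambda,\cdot}\|_{\ell^{p'}(\Lambda)}$, hence $\|K\|_{M(c_2)^{p',\infty}}$, by $\|A\|_{M^p\to M^\infty}$, and combining the two directions gives the asserted $\|A\|_{M^p\to M^\infty}\asymp\|K\|_{M(c_2)^{p',\infty}}$. The only genuinely new point, and the step I expect to require the most care, is the variable bookkeeping identifying $\tilde c_2$, i.e.\ verifying that the continuous norm $\|K\|_{M(c_2)^{p',\infty}}$ matches the discrete quantity $\sup_\lambda\|K_{\lambda,\cdot}\|_{\ell^{p'}}$ dictated by the dual estimate; the functional-analytic skeleton is an exact transcription of Theorem~\ref{teo1}.
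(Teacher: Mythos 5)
Your proposal is correct and is exactly the argument the paper intends: the paper itself only states that Theorem \ref{teo2} ``can be obtained arguing as above, using Proposition \ref{Prop5.4}, \eqref{aggiunta} and Theorem \ref{GM}'', and you have carried out that transcription faithfully, including the key bookkeeping check that $V_GK\circ\tilde c_2(\mu_1,-\mu_2,\lambda_1,\lambda_2)=K_{\lambda,\mu}$ so that $\|K\|_{M(c_2)^{p',\infty}}$ discretizes to $\sup_\lambda\|K_{\lambda,\cdot}\|_{\ell^{p'}(\Lambda)}$, the quantity controlled by the dual estimate. No gaps.
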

  As a consequence of Theorems \ref{teo1} and \ref{teo2} we have the following result.
  \begin{corollary} \label{co1} Let $A:\cS(\rd) \to \cS'(\rd)$ be an operator with distribution kernel $K\in\cS'(\rdd)$. The following conditions are equivalent:
  	\begin{itemize}
  		\item [(i)] $A: M^p(\rd)\to M^p(\rd)$, for every $1\leq p\leq\infty$.
  			\item [(ii)] $A: M^1(\rd)\to M^1(\rd)$ and  $A: M^\infty(\rd)\to M^\infty(\rd)$.
  			\item [(iii)] $K\in M(c_1)^{1,\infty}\cap M(c_2)^{1,\infty}$.
  	\end{itemize}
  \end{corollary}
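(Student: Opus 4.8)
The plan is to close the chain of implications $(i)\Rightarrow(ii)\Leftrightarrow(iii)\Rightarrow(i)$, so that the three conditions become equivalent. The implication $(i)\Rightarrow(ii)$ is immediate, since $(ii)$ is nothing but the two endpoint cases $p=1$ and $p=\infty$ of $(i)$. The equivalence $(ii)\Leftrightarrow(iii)$ is also essentially free: Theorem \ref{teo1} with $p=1$ shows that $A:M^1(\rd)\to M^1(\rd)$ is bounded if and only if $K\in M(c_1)^{1,\infty}$, while Theorem \ref{teo2} with $p=\infty$ (so that $p'=1$) shows that $A:M^\infty(\rd)\to M^\infty(\rd)$ is bounded if and only if $K\in M(c_2)^{1,\infty}$. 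Hence the conjunction in $(ii)$ is exactly $(iii)$. The entire mathematical content therefore lies in the single implication $(iii)\Rightarrow(i)$, that is, in upgrading boundedness at the two endpoints to boundedness on the whole scale $M^p(\rd)$, $1\le p\le\infty$.

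For this step I would work at the discretized level, as in the proofs of Theorems \ref{teo1} and \ref{teo2}. Recall that the Gabor matrix $K_{\lambda,\mu}=\la A\pi(\mu)\gamma,\pi(\lambda)g\ra$ of \eqref{aggiunta} is the kernel of $\tilde{A}:=C_gAD_\gamma$ acting on sequences over $\Lambda$. The key observation is that the two membership conditions in $(iii)$ translate, through Theorem \ref{GM} and the identity \eqref{aggiunta}, into the two one-sided summability conditions
\[
\sup_{\mu\in\Lambda}\sum_{\lambda\in\Lambda}|K_{\lambda,\mu}|<\infty,\qquad \sup_{\lambda\in\Lambda}\sum_{\mu\in\Lambda}|K_{\lambda,\mu}|<\infty,
\]
the first coming from $K\in M(c_1)^{1,\infty}$ (bounded ``column'' sums, via Proposition \ref{Prop5.2} with $p=1$, giving $\tilde A:\ell^1(\Lambda)\to\ell^1(\Lambda)$) and the second from $K\in M(c_2)^{1,\infty}$ (bounded ``row'' sums, via Proposition \ref{Prop5.4} with $p'=1$, giving $\tilde A:\ell^\infty(\Lambda)\to\ell^\infty(\Lambda)$).

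I would then invoke Schur's test (equivalently, Riesz--Thorin interpolation between the $\ell^1\to\ell^1$ and $\ell^\infty\to\ell^\infty$ bounds) to conclude that $\tilde A:\ell^p(\Lambda)\to\ell^p(\Lambda)$ is bounded for every $1\le p\le\infty$, with norm controlled by the two suprema above. Finally, using the factorization $A=D_\gamma\tilde A C_g$ together with the boundedness $C_g:M^p(\rd)\to\ell^p(\Lambda)$ and $D_\gamma:\ell^p(\Lambda)\to M^p(\rd)$ from \eqref{DC} (and $D_\gamma C_g=I$), the boundedness of $\tilde A$ on $\ell^p$ transfers to boundedness of $A$ on $M^p(\rd)$, which is $(i)$.

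The step I expect to carry the weight is precisely this passage through the intermediate exponents: the endpoint equivalences are bookkeeping, but obtaining all $1<p<\infty$ genuinely requires an interpolation/Schur argument rather than a direct reading of a single kernel characterization, since no one mixed modulation condition characterizes boundedness on all $M^p$ simultaneously. As a cross-check, the same conclusion follows by complex interpolation of the modulation spaces, using $[M^1(\rd),M^\infty(\rd)]_\theta=M^p(\rd)$ with $1/p=1-\theta$; I nonetheless prefer the discretized Schur argument, because it remains entirely within the Gabor-frame machinery already set up and sidesteps the well-known delicacy of complex interpolation against the endpoint $M^\infty$.
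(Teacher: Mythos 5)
Your proposal is correct, and the endpoint bookkeeping ($(i)\Rightarrow(ii)$ trivially, $(ii)\Leftrightarrow(iii)$ via Theorem \ref{teo1} with $p=1$ and Theorem \ref{teo2} with $p=\infty$) matches the paper exactly. Where you diverge is in the only substantive step, $(ii)\Rightarrow(i)$: the paper disposes of it in one line by invoking complex interpolation for modulation spaces, $[M^1,M^\infty]_\theta=M^p$, citing \cite{F1}, whereas you descend to the Gabor matrix $K_{\lambda,\mu}$, read off bounded column sums from $K\in M(c_1)^{1,\infty}$ and bounded row sums from $K\in M(c_2)^{1,\infty}$, apply Schur's test to get $\tilde A:\ell^p(\Lambda)\to\ell^p(\Lambda)$ for all $p$, and transfer back via $A=D_\gamma\tilde A C_g$ and \eqref{DC}. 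Both arguments are valid, but they buy different things: the paper's route is shorter and stays at the level of function spaces, at the cost of relying on the interpolation identity with the endpoint $M^\infty$, which (as you rightly note) is a genuinely delicate point for the complex method and is here outsourced to a citation. Your route is self-contained within the discretization machinery already built for Theorems \ref{teo1} and \ref{teo2}, gives the explicit norm bound $\|\tilde A\|_{\ell^p\to\ell^p}\leq C_1^{1/p'}C_2^{1/p}$ in terms of the two suprema (hence a quantitative version of $(iii)\Rightarrow(i)$), and sidesteps the interpolation subtlety entirely; the only price is a few more lines and the (standard) Schur lemma. I would regard your argument as the more robust of the two.
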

  \begin{proof}
  	All the equivalences follow trivially from Theorems  \ref{teo1} and \ref{teo2} but $(ii) \Rightarrow (i)$, which follows by complex interpolation for modulation spaces \cite{F1}. 
  \end{proof}
  \section{Operators acting on $M^{p,q}$}
  Let us now consider the problem of boundedness of an integral operator on $M^{p,q}(\rd)$, $1\leq p,q\leq \infty$. We do no longer expect a characterization as in the case $p=q$, studied in the previous section (see Remark \ref{oss4} below). However sufficient conditions in the same spirit can be stated.\par 
 Consider  the permutations $c_3$ and $c_4$ of the set $\{1,\dots, 4d\}$ with related $\tilde{c}_3$, $\tilde{c}_4$ defined  by
  \begin{equation}\label{Dh3}\tilde{c}_3(x_1,x_2, x_3,x_{4})= 
  \begin{pmatrix} 0_d&I_d&0_d&0_d\\
  0_d&0_d&I_d&0_d\\
  I_d&0_d&0_d&0_d\\
  0_d&0_d&0_d&I_d\end{pmatrix}\,\begin{pmatrix} x_1\\
  x_{2}\\x_{3}\\  x_{4}
  \end{pmatrix}=(x_2,x_3,x_1,x_4),
  \end{equation}
  and 
   \begin{equation}\label{Dh4}\tilde{c}_4(x_1,x_2, x_3,x_{4})= 
   \begin{pmatrix} 
   I_d&0_d&0_d&0_d
   \\0_d&0_d&0_d&I_d\\
   0_d&I_d&0_d&0_d\\
   0_d&0_d&I_d&0_d\end{pmatrix}\,\begin{pmatrix} x_1\\
   x_{2}\\x_{3}\\  x_{4}
   \end{pmatrix}=(x_1,x_4,x_2,x_3),
   \end{equation}
  for $x_i\in\rd$, $i=1,\dots,4$.\par
The following result was proved in \cite[Proposition 5.1]{fio1} (a continuous version is contained in \cite[Proposition 2.4]{fio5}).
   \begin{proposition}\label{proschur}
  	Consider an at most countable index set $J$ and the operator defined on
  	sequences $(a_{\lambda_1,\lambda_2})$ on the set
  	$\Lambda= J\times J$
  	by
  	\[
  	(Aa)_{\lambda_1,\lambda_2}=\sum_{\mu_1,\mu_2\in J}
  	{K_{\lambda_1,\lambda_2,\mu_1,\mu_2}}a_{\mu_1,\mu_2}.
  	\]
  	(i) If $K\circ\tilde{c_3}\in\ell^{1,\infty,1,\infty}(J\times J\times J\times J)$, 
  	then $A$ is continuous on
  	$\ell^{\infty,1}(\Lambda)$. Moreover,
  	\begin{equation}\label{norm1}
  \|A\|_{\ell^{\infty,1}\to \ell^{\infty,1}}\leq\|K\circ\tilde{c_3}\|_{\ell^{1,\infty,1,\infty}}.
  	\end{equation}
  	(ii)  If $K\circ\tilde{c_4}\in\ell^{1,\infty,1,\infty}(J\times J\times J\times J)$, 
  	then $A$ is continuous on
  	$\ell^{1,\infty}(\Lambda)$, with
  	\begin{equation}\label{norm2}
  	\|A\|_{\ell^{1,\infty}\to \ell^{1,\infty}}\leq\|K\circ\tilde{c_4}\|_{\ell^{1,\infty,1,\infty}}.
  	\end{equation}
  	(iii) If $K$ satisfies assumptions (i) and (ii) and moreover  $K\circ c_0, K\in
  	\ell^{1,\infty}(\Lambda\times\Lambda)$, where the permutation $c_0$ is defined in \eqref{perminv}, then the operator $A$  is
  	continuous on
  	$\ell^{p,q}(\Lambda)$, for
  	every $1\leq p,q\leq\infty$.
  \end{proposition}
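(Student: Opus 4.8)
The plan is to treat (i) and (ii) as generalized Schur tests adapted to the mixed $\ell^{p,q}$ geometry, and then to deduce (iii) from (i), (ii) together with two classical Schur tests via complex interpolation. Throughout I write $\lambda=(\lambda_1,\lambda_2)$, $\mu=(\mu_1,\mu_2)$ and recall that the inner exponent of $\ell^{p,q}(\Lambda)$, $\Lambda=J\times J$, refers to the first index and the outer one to the second, so that $\|a\|_{\ell^{\infty,1}}=\sum_{\lambda_2}\sup_{\lambda_1}|a_{\lambda_1,\lambda_2}|$ and $\|a\|_{\ell^{1,\infty}}=\sup_{\lambda_2}\sum_{\lambda_1}|a_{\lambda_1,\lambda_2}|$. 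Unwinding the definitions, the hypothesis of (i) reads
\[
\|K\circ\tilde{c}_3\|_{\ell^{1,\infty,1,\infty}}=\sup_{\mu_2}\sum_{\lambda_2}\sup_{\lambda_1}\sum_{\mu_1}|K_{\lambda_1,\lambda_2,\mu_1,\mu_2}|<\infty,
\]
which is exactly the nesting of sums and suprema recorded by $\tilde{c}_3$.

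For (i) I would estimate $\|Aa\|_{\ell^{\infty,1}}$ directly, using only the two elementary bounds $\sum_j b_j|a_j|\le(\sup_j|a_j|)\sum_j b_j$ (for $b_j\ge0$) and $\sup_i\sum_j f_{ij}\le\sum_j\sup_i f_{ij}$. Starting from
\[
\|Aa\|_{\ell^{\infty,1}}\le\sum_{\lambda_2}\sup_{\lambda_1}\sum_{\mu_1,\mu_2}|K_{\lambda_1,\lambda_2,\mu_1,\mu_2}|\,|a_{\mu_1,\mu_2}|,
\]
I would first bound the inner sum over $\mu_1$ by pulling out $\sup_{\mu_1}|a_{\mu_1,\mu_2}|$, producing the factor $\sum_{\mu_1}|K|$ together with the outer piece $\sum_{\mu_2}\sup_{\mu_1}|a_{\mu_1,\mu_2}|=\|a\|_{\ell^{\infty,1}}$. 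Moving the remaining supremum over $\lambda_1$ and sum over $\lambda_2$ outward in the stated order then factors off $\|a\|_{\ell^{\infty,1}}$ and leaves exactly $\sup_{\mu_2}\sum_{\lambda_2}\sup_{\lambda_1}\sum_{\mu_1}|K|=\|K\circ\tilde{c}_3\|_{\ell^{1,\infty,1,\infty}}$ as the operator-norm constant, giving \eqref{norm1}.

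Part (ii) is the analogous computation for the target $\ell^{1,\infty}$, and here the delicate point is the order of operations: the supremum over $\mu_1$ must be taken \emph{after} the inner sum over $\lambda_1$, not before, since the naive order produces only the weaker constant $\sup_{\lambda_2}\sum_{\mu_2}\sum_{\lambda_1}\sup_{\mu_1}|K|\ge\|K\circ\tilde{c}_4\|_{\ell^{1,\infty,1,\infty}}$. Performing $\sum_{\lambda_1}|K|$ first, then extracting $\sup_{\mu_1}$, then summing over $\mu_2$ against $\sum_{\mu_1}|a_{\mu_1,\mu_2}|$, and finally taking $\sup_{\lambda_2}$, reproduces precisely $\|K\circ\tilde{c}_4\|_{\ell^{1,\infty,1,\infty}}$ and yields \eqref{norm2}. (Alternatively, (ii) can be reached from (i) by a transposition/duality argument, bearing in mind the usual care required at $\ell^\infty$-endpoints; the direct estimate is however self-contained.) I expect this bookkeeping of the correct ordering — which is exactly what the two permutations encode — to be the main obstacle in (i)–(ii).

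For (iii) I would first note that $K\in\ell^{1,\infty}(\Lambda\times\Lambda)$, i.e.\ $\sup_\mu\sum_\lambda|K_{\lambda,\mu}|<\infty$, is the classical Schur condition making $A$ bounded on $\ell^1(\Lambda)=\ell^{1,1}(\Lambda)$, while $K\circ c_0\in\ell^{1,\infty}(\Lambda\times\Lambda)$ amounts to $\sup_\lambda\sum_\mu|K_{\lambda,\mu}|<\infty$, the Schur condition for boundedness on $\ell^\infty(\Lambda)=\ell^{\infty,\infty}(\Lambda)$. Together with the $\ell^{\infty,1}$ and $\ell^{1,\infty}$ bounds from (i) and (ii), this establishes boundedness of $A$ at the four corners $\ell^{1,1}$, $\ell^{1,\infty}$, $\ell^{\infty,1}$, $\ell^{\infty,\infty}$ of the exponent square. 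Complex interpolation of mixed-norm sequence spaces (cf.\ \cite{BP61}), applied first in the inner exponent along $q=1$ and along $q=\infty$ to obtain $\ell^{p,1}$ and $\ell^{p,\infty}$, and then in the outer exponent for each fixed $p$, fills in $\ell^{p,q}(\Lambda)$ for every $1\le p,q\le\infty$. The only point needing a little care is the interpolation involving the $\infty$-endpoints, which is standard on sequence spaces once the four corner estimates are in hand.
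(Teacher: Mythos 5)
Your proof is correct and follows essentially the same route as the argument the paper relies on (the paper itself only cites \cite[Proposition 5.1]{fio1} for this statement): direct H\"older/Schur-type estimates carried out in exactly the nesting order encoded by $\tilde{c}_3$ and $\tilde{c}_4$ for (i)--(ii), plus the two classical Schur tests for $\ell^1$ and $\ell^\infty$ and mixed-norm Riesz--Thorin interpolation from the four corner estimates for (iii). Your unwinding of the mixed norms (inner exponent attached to the first variable), your warning about taking $\sup_{\mu_1}$ only after the sum over $\lambda_1$ in (ii), and your reading of $K,\,K\circ c_0\in\ell^{1,\infty}(\Lambda\times\Lambda)$ as the $\ell^1$ and $\ell^\infty$ Schur conditions are all accurate.
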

  \begin{remark}\label{oss4} 
  Observe that the reverse inequalites in \eqref{norm1}, \eqref{norm2} do not hold, even if one allows a multiplicative constant. Consider for example \eqref{norm1}, with $J=\{0,1,\ldots, N-1\}$, $N\in\bN$, where
  \[
K_{\lambda_1,\lambda_2,\mu_1,\mu_2}=F_{\lambda_2,\mu_1}=\frac{1}{\sqrt{N}}e^{-2\pi i \lambda_2\mu_1}
  \]
is the Fourier matrix.  We have 
\[
(K\circ\tilde{c_3})_{\mu_1,\lambda_1,\lambda_2,\mu_2}=K_{\lambda_1,\lambda_2,\mu_1,\mu_2}
\]
so that 
   \[
   \|K\circ\tilde{c_3}\|_{\ell^{1,\infty,1,\infty}}=\|F\|_{\ell^1}=N^{3/2}
   \]
  but
   \begin{equation}\label{oss4eq}
   \|A\|_{\ell^{\infty,1}\to \ell^{\infty,1}}\leq N,
   \end{equation}
   which blows-up, as $N\to+\infty$, at a lower rate.\par
   Let us verify \eqref{oss4eq}. We have 
   \begin{align*}
\|   \| \sum_{\mu_1,\mu_2\in J}
  	{K_{\lambda_1,\lambda_2,\mu_1,\mu_2}}a_{\mu_1,\mu_2}\|_{\ell^\infty_{\lambda_1}}\|_{\ell^1_{\lambda_2}}
	&=\|\sum_{\mu_1\in J}
  	{F_{\lambda_2,\mu_1}}\sum_{\mu_2\in J}a_{\mu_1,\mu_2}\|_{\ell^1_{\lambda_2}}\\
	& =\|\sum_{\mu_1\in J}
  	{F_{\lambda_2,\mu_1}}\tilde{a}_{\mu_1}\|_{\ell^1_{\lambda_2}}
   \end{align*}
   with $\tilde{a}_{\mu_1}:=\sum_{\mu_2\in J}a_{\mu_1,\mu_2}$. Now, using the embeddings
   \[
   \|b\|_{\ell^1(J)}\leq \sqrt{N} \|b\|_{\ell^2(J)},\qquad \|b\|_{\ell^2(J)}\leq \sqrt{N} \|b\|_{\ell^\infty(J)}
   \]
   and the fact that $F$ is a unitary transformation of $\ell^2(J)$ we get \eqref{oss4eq}.
  \end{remark} 
As a consequence, we obtain the following boundedness result.\par 
  Consider  the permutations $c_5$ and $c_6$ of the set $\{1,\dots, 4d\}$ with related $\tilde{c}_5$, $\tilde{c}_6$ defined  by
  \begin{equation}
  \label{Dh3}\tilde{c}_5= \tilde{c}_1 \tilde{c}_3=
  \begin{pmatrix} 
  0_d&I_d&0_d&0_d\\
  I_d&0_d&0_d&0_d\\
  0_d&0_d&I_d&0_d\\
  0_d&0_d&0_d&I_d\end{pmatrix}
  \end{equation}
  and 
   \begin{equation}\label{Dh4}\tilde{c}_6= \tilde{c}_1 \tilde{c}_4=
   \begin{pmatrix}
    I_d&0_d&0_d&0_d\\
    0_d&I_d&0_d&0_d\\
   0_d&0_d&0_d&I_d\\
   0_d&0_d&I_d&0_d\end{pmatrix}.
   \end{equation}
 \begin{theorem}\label{teo3}  (i) A linear continuous operator $A: \cS(\rd)\to \cS'(\rd)$ is bounded on $M^{\infty,1}(\rd)$ if its distribution kernel $K$ satisfies 
 	\begin{equation}\label{kernel4}
 	K\in M(c_5)^{1,\infty,1,\infty}.
 	\end{equation}
 	(ii) A linear continuous operator $A: \cS(\rd)\to \cS'(\rd)$ is bounded on $M^{1,\infty}(\rd)$ if its distribution kernel $K$ satisfies 
 	\begin{equation}\label{kernel5}
 	K\in M(c_6)^{1,\infty,1,\infty}.
 	\end{equation}
 \end{theorem}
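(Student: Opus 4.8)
The plan is to follow the scheme of the proof of Theorem \ref{teo1}, replacing the mapping properties of an abstract kernel (Propositions \ref{Prop5.2}--\ref{Prop5.4}) by the Schur-type test of Proposition \ref{proschur}. Fix, as in Theorem \ref{teo1}, a window $g\in\cS(\rd)$ with $\mathcal{G}(g,\Lambda)$ a frame, a dual window $\gamma$, and set $G=g\otimes\bar\gamma$, so that $\mathcal{G}(G,\Lambda\times\Lambda)$ is a frame for $\lrdd$ with window in $M^1(\rdd)$. By \eqref{aggiunta} the Gabor matrix of $A$,
\[
K_{\lambda,\mu}=V_G K\circ\tilde{c}_1(\lambda_1,\lambda_2,\mu_1,-\mu_2),\qquad \lambda=(\lambda_1,\lambda_2),\ \mu=(\mu_1,\mu_2)\in\Lambda,
\]
is the integral kernel of the operator $\tilde{A}=C_g A D_\gamma$ acting on sequences indexed by $\Lambda$. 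The whole argument then consists in discretizing the membership hypothesis by Theorem \ref{GM}, feeding it into Proposition \ref{proschur}, and transferring boundedness back to $A$ through the factorization $A=D_\gamma\tilde{A}C_g$ (recall $D_\gamma C_g=\Id$), using that $C_g$ and $D_\gamma$ are bounded between $M^{\infty,1}(\rd)$, resp.\ $M^{1,\infty}(\rd)$, and the corresponding sequence spaces, cf.\ \eqref{DC}.

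I would carry out part (i) as follows. Proposition \ref{proschur}(i) guarantees that $\tilde{A}$ is bounded on $\ell^{\infty,1}(\Lambda)$ as soon as its matrix, precomposed with $\tilde{c}_3$, belongs to $\ell^{1,\infty,1,\infty}$. Since by definition $\tilde{c}_5=\tilde{c}_1\tilde{c}_3$, precomposing the Gabor matrix with $\tilde{c}_3$ turns $V_G K\circ\tilde{c}_1$ into $V_G K\circ\tilde{c}_5$ (evaluated on the lattice, with $\mu_2$ replaced by $-\mu_2$). On the other hand, applying Theorem \ref{GM} on $\rdd$ with the frame $\mathcal{G}(G,\Lambda\times\Lambda)$ and the permutation $c_5$ shows that the hypothesis $K\in M(c_5)^{1,\infty,1,\infty}$ is equivalent to $V_G K\circ\tilde{c}_5|_{\Lambda\times\Lambda}\in\ell^{1,\infty,1,\infty}$. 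The reflection $\mu_2\mapsto-\mu_2$ affects a single index slot and is a measure-preserving bijection of $\a\bZ^d$ (for the counting measure both the summation and the supremum over that slot are invariant), hence it leaves the mixed $\ell^{1,\infty,1,\infty}$-norm unchanged; therefore the hypothesis is precisely the one required by Proposition \ref{proschur}(i). It follows that $\tilde{A}$ is bounded on $\ell^{\infty,1}(\Lambda)$, and then $A=D_\gamma\tilde{A}C_g$ is bounded on $M^{\infty,1}(\rd)$.

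Part (ii) is entirely parallel: one uses $\tilde{c}_6=\tilde{c}_1\tilde{c}_4$ in place of $\tilde{c}_5$ and invokes Proposition \ref{proschur}(ii), which yields boundedness of $\tilde{A}$ on $\ell^{1,\infty}(\Lambda)$ and hence of $A$ on $M^{1,\infty}(\rd)$.

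The only point requiring care is the bookkeeping of the permutations: one must verify that the product $\tilde{c}_1\tilde{c}_3$ (resp.\ $\tilde{c}_1\tilde{c}_4$) is exactly the permutation $\tilde{c}_5$ (resp.\ $\tilde{c}_6$) entering the hypothesis, so that ``Gabor matrix precomposed with $\tilde{c}_3$'' coincides with ``$V_G K$ precomposed with $\tilde{c}_5$'', and that the spurious sign on the last variable is harmless. I do not expect a genuine obstacle here, since $c_5$ and $c_6$ were defined for this very purpose; the analytic substance is entirely contained in the external Proposition \ref{proschur}, whose reverse inequalities fail (Remark \ref{oss4}), which is precisely why only a sufficient condition, and not a full characterization, can be obtained in this case.
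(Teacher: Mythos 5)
Your proposal is correct and follows exactly the route the paper takes: the paper's proof of Theorem \ref{teo3} simply invokes \eqref{aggiunta}, Proposition \ref{proschur} and Theorem \ref{GM} together with the factorization $A=D_\gamma\tilde{A}C_g$ from the first part of the proof of Theorem \ref{teo1}. Your additional bookkeeping (verifying $\tilde{c}_5=\tilde{c}_1\tilde{c}_3$, $\tilde{c}_6=\tilde{c}_1\tilde{c}_4$, and that the sign flip in $\mu_2$ is harmless for the mixed norm) is exactly the detail the paper leaves implicit.
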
 
 \begin{proof}
 The result is a consequence of \eqref{aggiunta}, Proposition \ref{proschur} and Theorem \ref{GM}, by using the same arguments as in the first part of the proof of Theorem \ref{teo1}.
 \end{proof}
 
By interpolation we also obtain the following result.
\begin{corollary}\label{co2} Let $A:\cS(\rd) \to \cS'(\rd)$ be an operator with distribution kernel 
\[
K\in M(c_5)^{1,\infty,1,\infty}\cap M(c_6)^{1,\infty,1,\infty}.
\]
Then 
$
A: M^{p',p}(\rd)\to M^{p',p}(\rd)$ continuously, for every $1\leq p\leq \infty$, with $1/p+1/p'=1$.
\end{corollary}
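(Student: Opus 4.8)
The plan is to deduce this by complex interpolation, in the same spirit as the proof of Corollary \ref{co1}, starting from the two endpoint estimates supplied by Theorem \ref{teo3}. Under the standing hypothesis $K\in M(c_5)^{1,\infty,1,\infty}\cap M(c_6)^{1,\infty,1,\infty}$, part (i) of Theorem \ref{teo3} gives that $A$ is bounded on $M^{\infty,1}(\rd)$, and part (ii) that $A$ is bounded on $M^{1,\infty}(\rd)$. These are exactly the two ends of the target scale, corresponding to $p=1$ (where $M^{p',p}=M^{\infty,1}$) and $p=\infty$ (where $M^{p',p}=M^{1,\infty}$).

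The key point is that the family $\{M^{p',p}(\rd)\}_{1\le p\le\infty}$ is precisely the complex interpolation scale joining these two endpoints. Since complex interpolation of (mixed-norm) modulation spaces acts componentwise on the reciprocals of the Lebesgue exponents, one has
\[
[M^{\infty,1}(\rd),\,M^{1,\infty}(\rd)]_\theta=M^{p',p}(\rd),\qquad 0<\theta<1,
\]
where the inner and outer exponents are read off from $1/p'=(1-\theta)\cdot 0+\theta\cdot 1=\theta$ and $1/p=(1-\theta)\cdot 1+\theta\cdot 0=1-\theta$. In particular $1/p+1/p'=1$, and as $\theta$ ranges over $(0,1)$ the pair $(p',p)$ ranges over all admissible values with $p\in(1,\infty)$. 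Note that for $0<\theta<1$ both $p$ and $p'$ are finite, so the interpolated space carries no $\infty$-index and the interpolation identity is the clean standard one.

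Because $A$ is one fixed map $\cS(\rd)\to\cS'(\rd)$, its actions on $M^{\infty,1}(\rd)$ and on $M^{1,\infty}(\rd)$ are the restrictions of a single operator on the common ambient space $\cS'(\rd)$; hence $(M^{\infty,1}(\rd),M^{1,\infty}(\rd))$ is a compatible interpolation couple and the complex interpolation theorem applies to $A$, yielding $A:M^{p',p}(\rd)\to M^{p',p}(\rd)$ for every $0<\theta<1$, i.e.\ for $1<p<\infty$. Together with the endpoint cases $p=1$ and $p=\infty$ already provided by Theorem \ref{teo3}, this proves the claim for all $1\le p\le\infty$. The only delicate step is the interpolation identity for the couple $(M^{\infty,1},M^{1,\infty})$, whose endpoints are non-reflexive (an $\infty$-index occurs in each); however, since the interpolated spaces have finite exponents throughout the range $0<\theta<1$ we actually need, the identification is the standard one for modulation spaces (cf.\ \cite{F1, book}), so no additional machinery beyond what is invoked in Corollary \ref{co1} is required.
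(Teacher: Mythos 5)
Your argument is exactly the one the paper intends: the corollary is stated there with the single justification ``by interpolation,'' meaning precisely the complex interpolation of the two endpoint bounds on $M^{\infty,1}(\rd)$ and $M^{1,\infty}(\rd)$ supplied by Theorem \ref{teo3}, as in the proof of Corollary \ref{co1}. Your proposal is correct and, if anything, more careful than the paper, since it also addresses the identification $[M^{\infty,1},M^{1,\infty}]_\theta=M^{p',p}$ for $0<\theta<1$.
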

By combining Corollaries \ref{co1} and  \ref{co2}, we finally obtain the following result.
\begin{corollary}\label{co7} Let $A:\cS(\rd) \to \cS'(\rd)$ be an operator with distribution kernel \[
K\in M(c_1)^{1,\infty}\cap M(c_2)^{1,\infty}\cap M(c_5)^{1,\infty,1,\infty}\cap M(c_6)^{1,\infty,1,\infty}.
\]
Then $A: M^{p,q}(\rd)\to M^{p,q}(\rd)$ continuously, for every $1\leq p,q\leq\infty$.
\end{corollary}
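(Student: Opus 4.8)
The plan is to combine the two one–parameter families of continuity results already established and to fill in the remaining indices by complex interpolation of modulation spaces. Concretely, the hypothesis $K\in M(c_1)^{1,\infty}\cap M(c_2)^{1,\infty}$ lets me invoke Corollary \ref{co1} to obtain $A:M^p(\rd)\to M^p(\rd)$ for every $1\le p\le\infty$, i.e.\ boundedness on the whole family $M^{p,p}$. The complementary hypothesis $K\in M(c_5)^{1,\infty,1,\infty}\cap M(c_6)^{1,\infty,1,\infty}$ lets me invoke Corollary \ref{co2} to obtain $A:M^{p',p}(\rd)\to M^{p',p}(\rd)$ for every $1\le p\le\infty$, where $1/p+1/p'=1$. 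It then remains only to pass from these two distinguished families to the full two–parameter range $M^{p,q}$, $1\le p,q\le\infty$.

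For this I would work in the reciprocal coordinates $(a,b)=(1/p,1/q)\in[0,1]^2$, in which complex interpolation of modulation spaces reads $[M^{p_0,q_0},M^{p_1,q_1}]_\theta=M^{p_\theta,q_\theta}$ with $(1/p_\theta,1/q_\theta)=(1-\theta)(1/p_0,1/q_0)+\theta(1/p_1,1/q_1)$. In these coordinates Corollary \ref{co1} yields boundedness along the diagonal $\{a=b\}$ and Corollary \ref{co2} along the anti-diagonal $\{a+b=1\}$. The key elementary observation is that every $(a,b)\in[0,1]^2$ lies on a segment joining a diagonal point $(t,t)$ to an anti-diagonal point $(1-s,s)$: writing $(a,b)=(1-\theta)(t,t)+\theta(1-s,s)$ and solving for $t,s$, one finds that admissible parameters $t,s\in[0,1]$ exist exactly when $\theta$ can be chosen in the interval $[\,|a-b|,\ \min\{a+b,\,2-a-b,\,1\}\,]$, and a short check shows this interval is never empty on $[0,1]^2$. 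Hence, for suitable exponents, $M^{p,q}(\rd)$ is the complex interpolation space between one space on which $A$ is bounded by Corollary \ref{co1} and one on which it is bounded by Corollary \ref{co2}, whence boundedness on $M^{p,q}(\rd)$.

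The step requiring the most care is the use of complex interpolation when some exponent equals $\infty$, since $M^\infty$ is a dual space in which $\cS(\rd)$ is not dense and the complex method is delicate there. For $(a,b)$ in the open square one can arrange all four endpoint exponents to be finite (the admissible $\theta$-interval above then has strictly positive length with $t,s\in(0,1)$), so no difficulty arises. The boundary of the square, corresponding to $p$ or $q$ equal to $1$ or $\infty$, I would treat separately: each edge is the interpolation segment between two of the corners $M^1,M^\infty,M^{1,\infty},M^{\infty,1}$, all of which are already covered directly by Corollaries \ref{co1} and \ref{co2}, and the endpoint subtleties are handled exactly as in the proof of Corollary \ref{co1}, where the $M^\infty$ case is reached through complex interpolation for modulation spaces as in \cite{F1}. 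Assembling the open interior with the four edges then gives continuity of $A$ on $M^{p,q}(\rd)$ for every $1\le p,q\le\infty$, as claimed.
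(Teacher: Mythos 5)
Your argument is correct and is essentially the paper's own: the paper gives no proof beyond the remark that the corollary follows ``by combining Corollaries \ref{co1} and \ref{co2}'', which is exactly the interpolation between the diagonal family $M^{p,p}$ and the anti-diagonal family $M^{p',p}$ that you carry out (and your covering of the square $[0,1]^2$ by segments joining the diagonal to the anti-diagonal is verified correctly). The endpoint caveat you raise is handled in the same way the paper already handles it in Corollary \ref{co1}, namely by appealing to complex interpolation of modulation spaces as in \cite{F1}, so nothing further is needed.
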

\section*{Acknowledgments} This research was partially supported by  the Gruppo
Nazionale per l'Analisi Matematica, la Probabilit\`a e le loro
Applicazioni (GNAMPA) of the Istituto Nazionale di Alta Matematica
(INdAM), Italy.

\end{document}